\documentclass[11pt]{article}

\usepackage{amsmath, amsfonts, amssymb, amsthm, mathrsfs}
\usepackage{mathtools,bbm}
\usepackage{enumerate}
\usepackage{graphicx}
\usepackage{color}
\usepackage[margin=1in]{geometry}
\usepackage{lineno}
\usepackage[ruled, vlined, algonl]{algorithm2e}
\usepackage{float}
\usepackage{bm}
\usepackage{todonotes}



\newcommand{\argmin}{\operatornamewithlimits{argmin}}

\DeclareMathOperator{\prox}{prox}

\DeclareMathOperator{\sign}{sgn}

\newcommand{\x}[1]{x^{(#1)}}

\newtheorem{theorem}{Theorem}

\newtheorem{lemma}{Lemma}

\newtheorem{proposition}{Proposition}

\theoremstyle{remark}

\begin{document}

\title{The Proximity Operator of The Log-sum Penalty}
\author{Ashley Prater-Bennette\thanks{Air Force Research Laboratory, Rome, NY 13441. Email: \texttt{ashley.prater-bennette@us.af.mil}}  \and Lixin Shen\thanks{Department of Mathematics, Syracuse University, Syracuse, NY 13244. Email: \texttt{lshen03@syr.edu}}
\and Erin E. Tripp\thanks{Air Force Research Laboratory, Rome, NY 13441. Email: \texttt{erin.tripp.4@us.af.mil}}}
\maketitle

\begin{abstract}
The log-sum penalty is often adopted as a replacement for the $\ell_0$ pseudo-norm in compressive sensing and low-rank optimization. The hard-thresholding operator, i.e., the proximity operator of the $\ell_0$ penalty, plays an essential role in applications; similarly, we require an efficient method for evaluating the proximity operator of the log-sum penalty. Due to the nonconvexity of this function, its proximity operator is commonly computed through the iteratively reweighted $\ell_1$ method, which replaces the log-sum term with its first-order approximation. This paper reports that the proximity operator of the log-sum penalty actually has an explicit expression. With it, we show that the iteratively reweighted $\ell_1$ solution disagrees with the true proximity operator of the log-sum penalty in certain regions. As a by-product, the iteratively reweighted $\ell_1$ solution is precisely characterized in terms of the chosen initialization. We also give the explicit form of the proximity operator for the composition of the log-sum penalty with the singular value function, as seen in low-rank applications. These results should be useful in the development of efficient and accurate algorithms for optimization problems involving the log-sum penalty.
\end{abstract}

\section{Introduction}
The log-sum penalty function is defined as
\begin{equation}\label{eq:def:logf}
f(x) := \sum_{i=1}^n \log\left(1+\frac{|x_i|}{\epsilon}\right),
\end{equation}
where $\epsilon>0$ and $x=(x_1,x_2,\ldots,x_n)\in \mathbb{R}^n$. This function is commonly used to bridge the gap between the $\ell_0$ and $\ell_1$ norms in compressive sensing \cite{Candes-Wakin-Boyd:JFAA:08,Shen-Fang-Li:LogSum:2013} and as a nonconvex surrogate function of the matrix rank function in the low-rank regularization \cite{cai2020image,Deng-Dai-Liu-Zhang-Hu:IEEENNLS:13,Dong-Shi-Li-Ma-Huang:IEEEIP:2014,Fazel-Hindi-Boyd:03}.

When adopted in a compressive sensing problem, it essentially requires solving the following subproblem
\begin{equation}\label{eq:prox-f-CS}\tag{P\textsubscript{1}}
\min \left\{\frac{1}{2\lambda} \|x-z\|^2+\sum_{i=1}^n \log\left(1+\frac{|x_i|}{\epsilon}\right): x\in\mathbb{R}^n\right\},
\end{equation}
where $\lambda>0$ is a regularization parameter. In the language of convex analysis, the solution to \eqref{eq:prox-f-CS} is precisely the proximity operator of $f$ with index $\lambda$ at $z$ (see, e.g., \cite{Bauschke-Combettes:11}). Due to the nonconvexity of the objective, this problem is difficult to solve directly. Instead, an approximate solution is typically obtained through the iteratively reweighted $\ell_1$ minimization method, which sequentially linearizes $f$ around the current iterate and solves the linearized convex problem to obtain the next iterate \cite{Candes-Wakin-Boyd:JFAA:08}.

Similarly, an essential step in algorithms for low-rank optimization problems is  solving
\begin{equation}\label{eq:prox-f-Low-Rank}\tag{P\textsubscript{2}}
\min \left\{\frac{1}{2\lambda} \|X-Z\|^2_F+\sum_{i=1}^{m \land n} \log\left(1+\frac{\sigma_i(X)}{\epsilon}\right): X\in\mathbb{R}^{m \times n}\right\},
\end{equation}
where $\sigma_i(X)$ is the $i$th singular value of $X$. Here, ${m \land n}:=\min\{m,n\}$ and $\|\cdot\|_F$ denotes the Frobenius norm. A similar strategy in solving~\eqref{eq:prox-f-CS} is applied for solving \eqref{eq:prox-f-Low-Rank}.  The solution to \eqref{eq:prox-f-Low-Rank} is the proximity operator of $f\circ \sigma$ with index $\lambda$ at $Z$, where $\sigma: \mathbb{R}^{m \times n} \rightarrow \mathbb{R}^{m \land n}$ gives all singular values of a matrix. The regularization term in \eqref{eq:prox-f-Low-Rank} is the Log-Det heuristic used in \cite{Fazel-Hindi-Boyd:03}. In fact, if $m \le n$, then
$$
\sum_{i=1}^{m \land n} \log\left(1+\frac{\sigma_i(X)}{\epsilon}\right) = \log \mathrm{det} \left(I+\frac{1}{\epsilon} (X X^\top)^{1/2}\right).
$$
If $m >n$, we simply replace $X X^\top$ by $X^\top X$ in the above equation.

Notice that the log-sum function is additively separable; that is,
\begin{equation*}\label{eq:f-g}
f(x)= \sum_{i=1}^n g(x_i),
\end{equation*}
where $g: \mathbb{R} \rightarrow \mathbb{R}$ is defined by
\begin{equation}\label{eq:def:logg}
g(w) = \log\left(1+\frac{|w|}{\epsilon}\right).
\end{equation}
As a result, the solutions to \eqref{eq:prox-f-CS} and \eqref{eq:prox-f-Low-Rank} can be given in terms of the proximity operator of $g$. Throughout this paper, $f$ and $g$ always refer to the functions given in \eqref{eq:def:logf} and \eqref{eq:def:logg}, respectively.

The purpose of this paper is to show that there exist closed-form solutions to \eqref{eq:prox-f-CS} and \eqref{eq:prox-f-Low-Rank}, and, therefore, time-consuming iterative procedures can be avoided. These expressions do not appear in the existing literature to the best of our knowledge and should improve the efficiency and accuracy of algorithms in compressive sensing and low-rank minimization where the function $f$ is used. While finalizing this work, we became aware of a recent paper \cite{Xia-Wang-Meng-Chai-Liang:JCM-2017} which attempts to find the proximity operator of $f$ under the condition $\sqrt{\lambda} >\epsilon$. However, the results presented there are innacurate.

We remark that since the objective functions in \eqref{eq:prox-f-CS} and \eqref{eq:prox-f-Low-Rank} are nonconvex, the sequences generated by the iterative scheme described above may not converge to a global solution of the corresponding optimization problem. In fact, we identify under what circumstances the iteratively reweighted algorithm for problem \eqref{eq:prox-f-CS} does not produce an optimal solution.

The rest of the paper is outlined as follows: In the next section, we give an explicit expression of the proximity operator of $g$, followed by an explicit expression of the solution to \eqref{eq:prox-f-CS}. With this, we show in Section~\ref{sec:rwl1} that the iteratively reweighted $\ell_1$ solution to \eqref{eq:prox-f-CS} disagrees with the true proximity operator of the log-sum penalty in certain regions. These regions are completely determined by the chosen initial guess for the reweighted $\ell_1$ algorithm. In Section~\ref{sec:Low-Rank},  we give an explicit expression of solutions to \eqref{eq:prox-f-Low-Rank}. Our conclusions are drawn in Section~\ref{sec:conclusions}.

\section{Solutions to Optimization Problem~\eqref{eq:prox-f-CS}}\label{sec:SSPF}
We begin in \ref{subsub:0} by collecting some fundamental lemmas related to the proximity operator of $g$. In Subsection~\ref{subsub:1}, we give the explicit expression of the proximity operator of $g$ then use it to derive the proximity operator of $f$.

\subsection{Basic Properties}\label{subsub:0}

The proximity operator of $g$ at $z \in  \mathbb{R}$ with index $\lambda$ is defined by
$$
\mathrm{prox}_{\lambda g} (z) := \argmin \left\{\frac{1}{2\lambda} \|w-z\|^2+g(w): w\in\mathbb{R}\right\}.
$$
The proximity operator is a set-valued mapping from $\mathbb{R} \rightarrow 2^{\mathbb{R}}$, the power set of $\mathbb{R}$. Because $g$, as defined by \eqref{eq:def:logg}, is continuous and coercive, the set $\mathrm{prox}_{\lambda g}(z)$ is not empty for any $\lambda > 0$ and $z\in \mathbb{R}$. 

By definition, the elements of this set are solutions of an optimization problem, and in order to characterize these solutions, we must understand the behavior of the objective function around its critical points. For given $\lambda$ and $z$, define $q_{\lambda, z}: \mathbb{R} \rightarrow \mathbb{R}$ as follows
\begin{equation*}\label{def:q}
q_{\lambda, z}(x)= \frac{1}{2\lambda} (x-z)^2 + g(x).
\end{equation*}
Note that $q_{\lambda, z}$ is differentiable away from the origin with
\begin{equation}\label{eq:qderiv}
\frac{d}{dx}q_{\lambda, z}(x) = \frac{1}{\lambda}(x-z) + \frac{1}{x + \epsilon\sign(x)}.
\end{equation}
Clearly,
$$
\mathrm{prox}_{\lambda g}(z) = \mathrm{arg}\min \{q_{\lambda, z}(x): x\in \mathbb{R}\}.
$$
A straightforward consequence of this definition is that $\prox_{\lambda g}$ is symmetric about the origin and acts as a shrinkage operator.

\begin{lemma}\label{lemma:ProxzSmallerz}
Let $z$ be nonzero. Then
(i) $\mathrm{prox}_{\lambda g}(z)=-\mathrm{prox}_{\lambda g}(-z)$, and (ii) $\mathrm{prox}_{\lambda g}(z) \subseteq [0, z)$ if $z$ is positive and $\mathrm{prox}_{\lambda g}(z) \subseteq (z, 0]$ if $z$ is negative.
\end{lemma}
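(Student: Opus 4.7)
The plan is to exploit the symmetry and monotonicity of $g$, which is even and strictly increasing on $[0,\infty)$.

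For part (i), I would observe that since $|x|=|-x|$, the function $g$ is even, and therefore
\[
q_{\lambda,-z}(-x) = \tfrac{1}{2\lambda}(-x-(-z))^2 + g(-x) = \tfrac{1}{2\lambda}(x-z)^2 + g(x) = q_{\lambda,z}(x).
\]
Thus $x$ is a minimizer of $q_{\lambda,z}$ if and only if $-x$ is a minimizer of $q_{\lambda,-z}$, giving $\prox_{\lambda g}(z) = -\prox_{\lambda g}(-z)$.

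For part (ii), by (i) it suffices to handle $z>0$; the case $z<0$ follows upon negation. Let $x^* \in \prox_{\lambda g}(z)$. To rule out $x^*<0$, I would compare the objective at $x^*$ and at its reflection $-x^*>0$: evenness of $g$ gives $g(-x^*)=g(x^*)$, while the identity $(x^*-z)^2-(-x^*-z)^2 = -4x^*z$ is strictly positive when $x^*<0<z$. Hence $q_{\lambda,z}(-x^*)<q_{\lambda,z}(x^*)$, contradicting optimality of $x^*$. So $x^*\ge 0$.

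To rule out $x^*\ge z$, I would use \eqref{eq:qderiv}: for any $x>0$,
\[
\tfrac{d}{dx}q_{\lambda,z}(x) = \tfrac{1}{\lambda}(x-z) + \tfrac{1}{x+\epsilon},
\]
which is strictly positive whenever $x\ge z$ (since $z>0$ and $x+\epsilon>0$). Therefore $q_{\lambda,z}$ is strictly increasing on $[z,\infty)$, and any point $x\ge z$ is strictly beaten by some $x'<z$ close to $z$; in particular $x^*$ cannot lie in $[z,\infty)$. Combined with $x^*\ge 0$, this yields $\prox_{\lambda g}(z)\subseteq [0,z)$.

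The only subtlety is the strictness of the upper bound $x^*<z$, which I would address by the derivative argument rather than by mere comparison, since evaluating $q_{\lambda,z}$ at $z$ itself gives only $q_{\lambda,z}(x^*)\le q_{\lambda,z}(z)=g(z)$ and does not directly exclude equality. No other step looks obstructive: everything follows from evenness of $g$ and the positivity of $\tfrac{d}{dx}q_{\lambda,z}$ on $[z,\infty)$.
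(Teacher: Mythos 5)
Your proposal is correct and follows essentially the same route as the paper: symmetry of $q_{\lambda,z}$ for (i), reflection of a negative candidate to rule out $x^*<0$, monotonicity of $q_{\lambda,z}$ on $[z,\infty)$ via \eqref{eq:qderiv}, and the strict positivity of the derivative at $z$ (which is exactly the first-order Taylor term $\tfrac{1}{z+\epsilon}$ the paper uses) to exclude $z$ itself. No gaps.
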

\begin{proof}\ \ (i) This follows directly from the fact that $q_{\lambda, z}(x) = q_{\lambda, -z}(-x)$ for all $x \in \mathbb{R}$.

(ii) First assume $z>0$. One can check that $q_{\lambda, z}(x)<q_{\lambda, z}(-x)$, for $x>0$.
Hence the elements in $\mathrm{prox}_{\lambda g}(z)$ should be nonnegative. It can be verified from \eqref{eq:qderiv} that $q_{\lambda, z}(x)$ as a function of $x$ is increasing on $[z, \infty)$, which implies that $\mathrm{prox}_{\lambda g}(z) \subseteq  [0, z]$. By using Taylor's expansion for expanding $q_{\lambda, z}(x)$ at $z$, one has
$$
q_{\lambda, z}(x)=q_{\lambda, z}(z)+\frac{1}{z+\epsilon}(x-z)+\left(\frac{1}{2\lambda}-\frac{1}{2(z+\epsilon)^2}\right)(x-z)^2 +o(|x-z|^2).
$$
From this expression, we see that $q_{\lambda, z}(x)<q_{\lambda, z}(z)$ when $x$ is close enough to $z$ from below. We conclude that $\mathrm{prox}_{\lambda g}(z) \subseteq  [0, z)$.

The above discussion, along with (i), implies that $\mathrm{prox}_{\lambda g}(z) \subseteq (z, 0]$ if $z$ is negative.
 \end{proof}

By item (i) of Lemma \ref{lemma:ProxzSmallerz}, it is sufficient to study the proximity operator $\mathrm{prox}_{\lambda g}(z)$ for all non-negative $z$. Moreover, it follows immediately that for all $\lambda>0$,
\begin{equation*}\label{eq:prox-g-0}
\mathrm{prox}_{\lambda g}(0)=\{0\}.
\end{equation*}
For the rest of this section, we consider only $z>0$. Along with item (ii), we therefore only need to investigate the behavior of $q_{\lambda, z}(x)$ for $x \geq 0$. In this case, the derivative of $q_{\lambda, z}$ can be rewritten as
\begin{equation}\label{eq:qderiv-1}
\frac{d }{dx}q_{\lambda, z}(x)=\frac{(x-\frac{1}{2}(z-\epsilon))^2+\lambda-\frac{1}{4}(z+\epsilon)^2}{\lambda(x+\epsilon)}.
\end{equation}
For $z \ge \max\{2\sqrt{\lambda}-\epsilon,0\}$, the expression above can be factored as
\begin{equation}\label{eq:qderiv-2}
\frac{d }{dx}q_{\lambda, z}(x)=\frac{1}{\lambda(x+\epsilon)}(x-r_1(z))(x-r_2(z)),
\end{equation}
where
\begin{equation} \label{def:r1}
r_1(z):=\frac{1}{2}(z-\epsilon) - \sqrt{\frac{1}{4}(z+\epsilon)^2 - \lambda}
\end{equation}
and
\begin{equation}\label{def:r2}
r_2(z):=\frac{1}{2}(z-\epsilon) + \sqrt{\frac{1}{4}(z+\epsilon)^2 - \lambda}.
\end{equation}

The behavior of $q_{\lambda, z}$ depends on the relationship between the parameters $\lambda$ and $\epsilon$ as well as the magnitude of $z$, as described in the following lemmas. 

\begin{lemma}\label{lemma:qproperties}
For given $\lambda>0$ and $\epsilon > 0$,  the following statements hold.
\begin{enumerate}[(i)]
\item If $\sqrt{\lambda}\le \epsilon$, then $q_{\lambda, z}$ is increasing on $[0, \infty)$ for $z \in [0, \frac{\lambda}{\epsilon}]$; $q_{\lambda, z}$ is decreasing on $[0, r_2(z)]$ and increasing on $[r_2(z), \infty)$ for $z \in [\frac{\lambda}{\epsilon}, \infty)$.
\item If $\sqrt{\lambda}> \epsilon$, then $q_{\lambda, z}$ is increasing on $[0, \infty)$ for $z \in [0,  2\sqrt{\lambda}-\epsilon]$; $q_{\lambda, z}$ is increasing on $[0, r_1(z)]$, decreasing on $[r_1(z),r_2(z)]$, and increasing on $[r_2(z), \infty)$ for $z \in [2\sqrt{\lambda}-\epsilon, \frac{\lambda}{\epsilon}]$; $q_{\lambda, z}$ is decreasing on $[0, r_2(z)]$ and increasing on $[r_2(z), \infty)$ for $z \in [\frac{\lambda}{\epsilon}, \infty)$.

\end{enumerate}
\end{lemma}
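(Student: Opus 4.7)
The plan is to reduce the monotonicity question to a sign analysis of the numerator of \eqref{eq:qderiv-1}. Since the denominator $\lambda(x+\epsilon)$ is strictly positive for $x \geq 0$, it suffices to track the sign of the quadratic $p(x) := (x-\tfrac{1}{2}(z-\epsilon))^2 + \lambda - \tfrac{1}{4}(z+\epsilon)^2$. Its discriminant is $(z+\epsilon)^2 - 4\lambda$, which for $z\ge 0$ is negative precisely when $z < 2\sqrt{\lambda}-\epsilon$. In that regime $p > 0$ everywhere, so $q_{\lambda,z}$ is strictly increasing on $[0,\infty)$; this disposes of the first subcase in each part of the lemma.

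When $z \geq \max\{2\sqrt{\lambda}-\epsilon, 0\}$, $p$ factors as $(x-r_1(z))(x-r_2(z))$ with $r_1 \leq r_2$ as in \eqref{def:r1}--\eqref{def:r2}, and Vieta's formulas yield
\begin{equation*}
r_1(z)+r_2(z) = z-\epsilon, \qquad r_1(z)\,r_2(z) = \lambda - z\epsilon.
\end{equation*}
Reading off signs from these two identities: both roots are $\leq 0$ iff $z \leq \epsilon$ and $z \leq \lambda/\epsilon$; both are $\geq 0$ iff $z \geq \epsilon$ and $z \leq \lambda/\epsilon$; and $r_1 \leq 0 \leq r_2$ iff $z \geq \lambda/\epsilon$. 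Restricting $p$ to $[0,\infty)$ then gives its sign pattern in each situation, and the claimed monotonicity of $q_{\lambda,z}$ follows.

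The remaining bookkeeping is to align these root-sign regimes with the intervals $[0,\lambda/\epsilon]$, $[2\sqrt{\lambda}-\epsilon, \lambda/\epsilon]$, and $[\lambda/\epsilon, \infty)$ appearing in the statement. For this I would establish the orderings of the thresholds under each hypothesis. Under $\sqrt{\lambda}\leq\epsilon$ one has $\lambda/\epsilon \leq \sqrt{\lambda} \leq \epsilon$, so any $z \leq \lambda/\epsilon$ forces the ``both roots $\leq 0$'' regime and $p \geq 0$ on $[0,\infty)$, which is exactly the first assertion of (i). Under $\sqrt{\lambda}>\epsilon$ one checks $\epsilon < 2\sqrt{\lambda}-\epsilon \leq \lambda/\epsilon$ (the latter via $(\sqrt{\lambda}-\epsilon)^2\ge 0$), so $z \in [2\sqrt{\lambda}-\epsilon, \lambda/\epsilon]$ lands in the ``both roots $\geq 0$'' regime and $p$ flips sign twice on $[0,\infty)$ at $r_1$ then $r_2$, producing the three-interval pattern of (ii). The $z \geq \lambda/\epsilon$ subcase is the same in both parts: $r_1 \leq 0 \leq r_2$, so $p$ has the sign of $x-r_2(z)$ on $[0,\infty)$.

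The main obstacle is not any single calculation but keeping the three threshold orderings and the three root-sign regimes mutually consistent across all subcases; each individual check is elementary once the Vieta identities and the ordering lemmas $\lambda/\epsilon \leq \sqrt{\lambda}\leq \epsilon$ (case (i)) and $\epsilon < \sqrt{\lambda} < \lambda/\epsilon$ (case (ii)) are in hand. Everything else is routine manipulation of the factored derivative \eqref{eq:qderiv-2}.
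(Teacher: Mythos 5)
Your argument is correct and shares the paper's overall skeleton (reduce to the sign of the quadratic numerator of \eqref{eq:qderiv-1}, dispose of the complex-root regime via the discriminant, then read off the sign pattern from the factorization \eqref{eq:qderiv-2}), but the key step --- determining the signs of $r_1(z)$ and $r_2(z)$ on each $z$-interval --- is done by a genuinely different mechanism. You use Vieta's formulas, $r_1+r_2=z-\epsilon$ and $r_1 r_2=\lambda-z\epsilon$, to characterize exactly when both roots are nonpositive, both nonnegative, or straddle zero, and then align those regimes with the thresholds via the orderings $\lambda/\epsilon\le\sqrt{\lambda}\le\epsilon$ (case (i)) and $\epsilon<2\sqrt{\lambda}-\epsilon\le\lambda/\epsilon$ (case (ii)). The paper instead computes $r_1'(z)<0$ and $r_2'(z)>0$, evaluates $r_1,r_2$ at the boundary points $0$, $2\sqrt{\lambda}-\epsilon$, and $\lambda/\epsilon$, and propagates the signs by monotonicity in $z$. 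Your route is more self-contained and arguably cleaner for this lemma, since it needs no calculus on $r_1,r_2$ and gives if-and-only-if descriptions of the sign regimes; the paper's route front-loads the strict monotonicity of $r_1$ and $r_2$ in $z$, which it reuses later (e.g., the invertibility of $r_1$ in Theorem~\ref{thm:alg-accuracy} and the location of $z_*$ in Proposition~\ref{prop:prox>epsilon}), so those facts would still need to be established separately if one adopted your proof. One small point to make explicit when writing this up: in case (i) with $2\sqrt{\lambda}>\epsilon$, the interval $[0,\lambda/\epsilon]$ is covered partly by the negative-discriminant argument and partly by the ``both roots $\le 0$'' regime --- you do handle this in your third paragraph, but the phrase ``this disposes of the first subcase in each part'' after the discriminant step slightly overstates what that step alone achieves.
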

\begin{proof}\ \
First, a few remarks on the functions $r_1$ and $r_2$. Recall that these functions are defined only for $z \geq \max\{2\sqrt{\lambda} - \epsilon, 0\}$. On this domain, we may determine the sign of $\frac{d}{dx} q_{\lambda, z}(x)$ by determining the sign of its factors. Clearly $r_2(z) > r_1(z)$ for all $z$ in this domain, and $r_2(\frac{\lambda}{\epsilon}) = 0$, noting that $\frac{\lambda}{\epsilon}=(2\sqrt{\lambda}-\epsilon)+(\sqrt{\epsilon}-\sqrt{\lambda/\epsilon})^2>2\sqrt{\lambda}-\epsilon$. Moreover,
$$
r'_1(z) = \frac{\sqrt{(z+\epsilon)^2-4\lambda}-(z+\epsilon)}{2\sqrt{(z+\epsilon)^2-4\lambda}}<0 \quad \mbox{and} \quad
r'_2(z) = \frac{\sqrt{(z+\epsilon)^2-4\lambda}+(z+\epsilon)}{2\sqrt{(z+\epsilon)^2-4\lambda}}>0.
$$
That is, $r_1$ is strictly decreasing, while $r_2$ is strictly increasing. The following proof relies on these observations, with care being taken around boundary points and zeros.

(i) If $\sqrt{\lambda}\le \epsilon$, we consider two situations, namely $2\sqrt{\lambda} \le \epsilon$ and  $2\sqrt{\lambda} > \epsilon$.
For $2\sqrt{\lambda} \le \epsilon$, the expression in \eqref{eq:qderiv-2} holds for all $z \ge 0$. Since $r_1(0)<0$, then $r_1(z) <0$ for all $z \ge 0$. Since $r_2(0)<0$ and $r_2(\lambda/\epsilon)=0$, then $r_2(z)<0$ for $z\in (0, \lambda/\epsilon)$ and $r_2(z)>0$ for $z\in (\lambda/\epsilon, \infty)$.  Based on these observations and \eqref{eq:qderiv-2}, $\frac{d}{dx} q_{\lambda, z}(x) < 0$ only when $x \in [0, r_2(z))$ and $z \in (\frac{\lambda}{\epsilon}, \infty)$.

For $2\sqrt{\lambda} > \epsilon$, we have $\lambda-\frac{1}{4}(z+\epsilon)^2 \ge 0$. Hence from \eqref{eq:qderiv-1}, $\frac{d}{dx}q_{\lambda, z}(x)>0$ for all $x \in [0,\infty)$ when $z\in [0, 2\sqrt{\lambda} - \epsilon]$. In the rest of discussion, we consider $z  \in (2\sqrt{\lambda} - \epsilon, \infty)$, for which expression in \eqref{eq:qderiv-2} holds. We have
$r_1(2\sqrt{\lambda} - \epsilon)=r_2(2\sqrt{\lambda} - \epsilon)=\sqrt{\lambda}-\epsilon \le 0$ and $r_2(\lambda/\epsilon)=0$. Because $r_1$ is strictly decreasing and $r_2$ is strictly increasing, we conclude that  
\begin{itemize} 
	\item $\frac{d }{dx}q_{\lambda, z}(x)>0$ for all $x \in [0,\infty)$ when $z\in [2\sqrt{\lambda}-\epsilon, \lambda/\epsilon)$, and 
	\item $\frac{d }{dx}q_{\lambda, z}(x)<0$ for all $x \in [0,r_2(z))$ and $\frac{d }{dx}q_{\lambda, z}(x)>0$ for all $x \in (r_2(z), \infty)$ when $z\in (\lambda/\epsilon, \infty)$.
\end{itemize} 
Thus item (i) holds regardless of the relationship between $2\sqrt{\lambda}$ and $\epsilon$.

(ii) It is clear from the above discussion that $\frac{d}{dx}q_{\lambda, z}(x)>0$ for all $x \in [0,\infty)$ when $z\in [0, 2\sqrt{\lambda} - \epsilon]$. Notice that $r_1(2\sqrt{\lambda} - \epsilon)=r_2(2\sqrt{\lambda} - \epsilon)=\sqrt{\lambda}-\epsilon > 0$ and $r_1(\lambda/\epsilon)=0$. Since $r_1(z)$ is strictly decreasing and $r_2(z)$ is strictly increasing on $[2\sqrt{\lambda} - \epsilon, \infty)$, we conclude that 
\begin{itemize}
	\item $\frac{d }{dx}q_{\lambda, z}(x)>0$ for  all $x \in [0,r_1(z))$,  $\frac{d }{dx}q_{\lambda, z}(x)<0$ for $x \in (r_1(z), r_2(z))$, and $\frac{d }{dx}q_{\lambda, z}(x)>0$ for  all $x \in (r_2(z), \infty)$ when $z \in [2\sqrt{\lambda} - \epsilon, \lambda/\epsilon)$; 
	\item $\frac{d }{dx}q_{\lambda, z}(x)<0$ for all $x \in [0,r_2(z))$ and $\frac{d }{dx}q_{\lambda, z}(x)>0$ for all $x \in (r_2(z), \infty)$ when $z\in (\lambda/\epsilon, \infty)$.
\end{itemize} 
This gives statement (ii). 
 \end{proof}

\begin{lemma}\label{lemma:qconvexity}
If $\sqrt{\lambda} \leq \epsilon$, then $q_{\lambda, z}$ is convex on $[0, \infty)$. If $\sqrt{\lambda} > \epsilon$, then $q_{\lambda, z}$  is concave on $[0, \sqrt{\lambda} - \epsilon]$ and convex on $[\sqrt{\lambda} - \epsilon, \infty)$.
\end{lemma}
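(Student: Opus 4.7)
The plan is to prove this by directly computing the second derivative of $q_{\lambda,z}$ on $[0,\infty)$ and identifying where it changes sign. On this interval, $|x|=x$, so $g(x)=\log(1+x/\epsilon)$ is smooth, and $q_{\lambda,z}$ is twice continuously differentiable. Starting from the first-derivative formula \eqref{eq:qderiv}, which on $[0,\infty)$ reduces to $q'_{\lambda,z}(x)=\tfrac{1}{\lambda}(x-z)+\tfrac{1}{x+\epsilon}$, one more differentiation gives
$$
q''_{\lambda, z}(x) = \frac{1}{\lambda} - \frac{1}{(x+\epsilon)^2}.
$$

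Since $x+\epsilon>0$ for all $x\geq 0$, the sign of $q''_{\lambda,z}(x)$ is determined by comparing $(x+\epsilon)^2$ to $\lambda$: namely, $q''_{\lambda,z}(x)\geq 0$ if and only if $x+\epsilon\geq\sqrt{\lambda}$, equivalently $x\geq \sqrt{\lambda}-\epsilon$. The two cases in the lemma now follow by splitting on the sign of $\sqrt{\lambda}-\epsilon$. In the regime $\sqrt{\lambda}\leq\epsilon$, the threshold $\sqrt{\lambda}-\epsilon$ is nonpositive, so $q''_{\lambda,z}(x)\geq 0$ for every $x\geq 0$, yielding convexity on $[0,\infty)$. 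In the regime $\sqrt{\lambda}>\epsilon$, the threshold $\sqrt{\lambda}-\epsilon$ is strictly positive and $q''_{\lambda,z}\leq 0$ on $[0,\sqrt{\lambda}-\epsilon]$ while $q''_{\lambda,z}\geq 0$ on $[\sqrt{\lambda}-\epsilon,\infty)$, giving concavity on the first subinterval and convexity on the second.

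This argument is essentially a routine second-derivative calculation, so I do not expect any substantive obstacle; the only minor point to note is that the appearance of $\sign(x)$ in \eqref{eq:qderiv} is harmless on $[0,\infty)$ because $g$ coincides with the smooth function $\log(1+x/\epsilon)$ there, so the second derivative exists throughout the interval (including at the endpoint $0$ as a right derivative).
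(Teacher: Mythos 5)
Your proof is correct and follows exactly the paper's approach: the paper likewise computes $\frac{d^2}{dx^2} q_{\lambda, z}(x) = \frac{1}{\lambda} - \frac{1}{(x+\epsilon)^2}$ on $(0,\infty)$ and reads off the sign, only stating it more tersely. Your additional remarks about the endpoint $0$ and the harmlessness of $\sign(x)$ on $[0,\infty)$ are fine but not essential.
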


\begin{proof}
This follows immediately from the fact that $\frac{d^2}{dx^2} q_{\lambda, z}(x) = \frac{1}{\lambda} - \frac{1}{(x+\epsilon)^2}$ for $x \in (0, \infty)$.
 \end{proof}

When $\sqrt{\lambda} \leq {\epsilon}$, Lemmas \ref{lemma:qproperties} and \ref{lemma:qconvexity} imply that $q_{\lambda, z}$ has a unique minimizer for each $z$. In other words, $\prox_{\lambda g}$ will be single-valued. To study $\mathrm{prox}_{\lambda g}$ in the case of  $\sqrt{\lambda} > \epsilon$, we need the following two lemmas.

\begin{lemma}\label{lemma:z1z2}
Let $0\le u<v$. If $\alpha \in \mathrm{prox}_{\lambda g}(u)$ and $\beta \in \mathrm{prox}_{\lambda g}(v)$, then $0 \le \alpha \le \beta$.
\end{lemma}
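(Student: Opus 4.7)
The plan is to use the classical swap/exchange argument for proximity operators. The nonnegativity $\alpha, \beta \geq 0$ follows directly from Lemma \ref{lemma:ProxzSmallerz} together with the degenerate case $u=0$ (in which $\mathrm{prox}_{\lambda g}(0) = \{0\}$), so the real content is the inequality $\alpha \leq \beta$. For this, I would exploit that $\alpha$ minimizes $q_{\lambda, u}$ and $\beta$ minimizes $q_{\lambda, v}$, which gives the pair of optimality inequalities
\begin{equation*}
q_{\lambda, u}(\alpha) \;\le\; q_{\lambda, u}(\beta)
\qquad\text{and}\qquad
q_{\lambda, v}(\beta) \;\le\; q_{\lambda, v}(\alpha).
\end{equation*}

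Next I would add these two inequalities. Because $q_{\lambda, z}(x) = \frac{1}{2\lambda}(x-z)^2 + g(x)$ and $g$ does not depend on $z$, the $g(\alpha)$ and $g(\beta)$ terms cancel on both sides of the sum, and we are left with a purely quadratic inequality
\begin{equation*}
(\alpha - u)^2 + (\beta - v)^2 \;\le\; (\beta - u)^2 + (\alpha - v)^2.
\end{equation*}
Expanding the squares, the pure $\alpha^2$, $\beta^2$, $u^2$, $v^2$ terms all cancel, and after collecting the cross terms the inequality reduces to $(\beta - \alpha)(v - u) \geq 0$. Since the hypothesis $0 \le u < v$ gives $v - u > 0$, this forces $\alpha \le \beta$, completing the argument.

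There is essentially no obstacle here: the argument is standard, and the fact that $g$ is nonconvex does not matter because the exchange trick only uses that both points are global minimizers of their respective $q_{\lambda, \cdot}$ (not any convexity or first-order condition). The only small care is the boundary case $u=0$, which is handled by the explicit formula $\mathrm{prox}_{\lambda g}(0)=\{0\}$, together with item (ii) of Lemma \ref{lemma:ProxzSmallerz} to conclude $\beta \in [0,v)$ so that $\alpha = 0 \le \beta$ automatically.
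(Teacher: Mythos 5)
Your proof is correct and uses exactly the same swap/exchange argument as the paper: summing the two optimality inequalities, cancelling the $g$ terms, and reducing to $(\alpha-\beta)(u-v)\ge 0$. The extra care you take with nonnegativity and the $u=0$ case is a slight elaboration the paper leaves implicit, but the core argument is identical.
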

\begin{proof}\ \ By the definition of the proximity operator, one has $q_{\lambda, u}(\alpha) \le q_{\lambda, u}(\beta)$ and $q_{\lambda, v}(\beta) \le q_{\lambda, v}(\alpha)$. Then, $q_{\lambda, u}(\alpha)+ q_{\lambda, v}(\beta) \le q_{\lambda, u}(\beta)+q_{\lambda, v}(\alpha)$. After simplification, we have from the previous inequality that  $(\alpha-\beta)(u-v)\ge 0$. Hence, $\alpha \le \beta$.
 \end{proof}

\begin{lemma}\label{lemma:singleton}
If the set $\mathrm{prox}_{\lambda g}(z_*)$ at some $z_*>0$ contains zero and a positive number, then $\mathrm{prox}_{\lambda g}(z)$ is a singleton for all $|z|\neq z_*$. In particular,  $\mathrm{prox}_{\lambda g}(z)=\{0\}$ for all $|z|<z_*$ and $\mathrm{prox}_{\lambda g}(z)$ contains only one nonzero element for all $|z|>z_*$.
\end{lemma}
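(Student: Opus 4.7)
The strategy is to exploit the monotonicity already established in Lemma \ref{lemma:z1z2}, using the two known elements $0$ and a positive $\alpha_*$ in $\mathrm{prox}_{\lambda g}(z_*)$ as anchors that sandwich the elements of $\mathrm{prox}_{\lambda g}(z)$ for other values of $z$.

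First I would handle the case $0<z<z_*$. Let $\beta\in\mathrm{prox}_{\lambda g}(z)$. By Lemma \ref{lemma:ProxzSmallerz}(ii), $\beta\ge 0$. Applying Lemma \ref{lemma:z1z2} with $u=z$, $v=z_*$, the element $\beta\in\mathrm{prox}_{\lambda g}(z)$ and $0\in\mathrm{prox}_{\lambda g}(z_*)$ gives $0\le\beta\le 0$, so $\beta=0$. Hence $\mathrm{prox}_{\lambda g}(z)=\{0\}$ for every $z\in(0,z_*)$, and by Lemma \ref{lemma:ProxzSmallerz}(i) the same holds for $z\in(-z_*,0)$.

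Next I would tackle $z>z_*$. For any $\beta\in\mathrm{prox}_{\lambda g}(z)$, applying Lemma \ref{lemma:z1z2} with $u=z_*$, $v=z$, and the element $\alpha_*\in\mathrm{prox}_{\lambda g}(z_*)$ gives $\beta\ge\alpha_*>0$. In particular, $\mathrm{prox}_{\lambda g}(z)$ contains no zero element. To upgrade this to singleton-ness, I would observe that the very existence of two distinct elements in $\mathrm{prox}_{\lambda g}(z_*)$ forces $\sqrt{\lambda}>\epsilon$, since Lemmas \ref{lemma:qproperties} and \ref{lemma:qconvexity} already showed $\mathrm{prox}_{\lambda g}$ is single-valued when $\sqrt{\lambda}\le\epsilon$. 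Under $\sqrt{\lambda}>\epsilon$, Lemma \ref{lemma:qproperties}(ii) tells us that on $[0,\infty)$ the function $q_{\lambda,z}$ has at most the two candidate minimizers $x=0$ and $x=r_2(z)$: for $z\in(2\sqrt{\lambda}-\epsilon,\lambda/\epsilon]$ it is increasing-decreasing-increasing with interior local minimum only at $r_2(z)$, and for $z>\lambda/\epsilon$ it is decreasing on $[0,r_2(z)]$ and increasing afterward, so $r_2(z)$ is the unique minimizer. Since $0$ has already been ruled out for $z>z_*$, the only remaining candidate is $r_2(z)$, so $\mathrm{prox}_{\lambda g}(z)=\{r_2(z)\}$ is a singleton.

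The only delicate step is ensuring $q_{\lambda,z}$ has no other local minimizer on $[0,\infty)$ besides $0$ and $r_2(z)$; this is supplied directly by Lemma \ref{lemma:qproperties}, so the argument reduces to bookkeeping. The symmetry in Lemma \ref{lemma:ProxzSmallerz}(i) then extends both conclusions from $z>0$ to $|z|\neq z_*$, which completes the proof.
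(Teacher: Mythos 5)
Your proof is correct, and the first half (showing $\mathrm{prox}_{\lambda g}(z)=\{0\}$ for $0<z<z_*$ via Lemma~\ref{lemma:z1z2} with the anchor $0\in\mathrm{prox}_{\lambda g}(z_*)$) coincides with the paper's argument. Where you genuinely diverge is in proving singleton-ness for $z>z_*$. The paper argues by contradiction without ever invoking the convexity dichotomy: if $\mathrm{prox}_{\lambda g}(z)$ contained two nonzero elements $\beta$ and $\gamma$, then $q_{\lambda,z}(\beta)=q_{\lambda,z}(\gamma)$ and $\frac{d}{dx}q_{\lambda,z}(\beta)=\frac{d}{dx}q_{\lambda,z}(\gamma)=0$ would force a third critical point between them, whereas $\frac{1}{\lambda}(x-z)+\frac{1}{x+\epsilon}$ has at most two roots on $[0,\infty)$ --- a pure root-counting argument. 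You instead first deduce $\sqrt{\lambda}>\epsilon$ from the multi-valuedness at $z_*$ (using the remark following Lemma~\ref{lemma:qconvexity}) and then read the candidate minimizers $\{0,r_2(z)\}$ off the monotonicity classification in Lemma~\ref{lemma:qproperties}(ii). Both routes are sound; yours buys the extra information that the unique element for $z>z_*$ is exactly $r_2(z)$ (which the paper only extracts later, in Proposition~\ref{prop:prox>epsilon}), at the cost of leaning on the full case analysis of Lemma~\ref{lemma:qproperties} and a small piece of implicit bookkeeping (namely that $z_*>2\sqrt{\lambda}-\epsilon$, so the regime where $q_{\lambda,z}$ is increasing on all of $[0,\infty)$ cannot occur for $z>z_*$ --- worth one sentence to make explicit, since $r_2(z)$ is undefined there). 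The paper's argument is leaner and independent of the sign of $\sqrt{\lambda}-\epsilon$.
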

\begin{proof}\ \ By Lemma~\ref{lemma:ProxzSmallerz} and equation~\eqref{eq:prox-g-0}, we only need to consider $\mathrm{prox}_{\lambda g}(z)$ for $z>0$. Since $0 \in \mathrm{prox}_{\lambda g}(z_*)$, then $\mathrm{prox}_{\lambda g}(z)=\{0\}$ for all $0\le z<z_*$ by Lemma~\ref{lemma:z1z2}. Let $\alpha>0$ be an element in $\mathrm{prox}_{\lambda g}(z_*)$. Then, by Lemma~\ref{lemma:z1z2} again, all elements in $\mathrm{prox}_{\lambda g}(z)$ must greater than or equal to $\alpha$ for all $z>z_*$.

Suppose that  $\mathrm{prox}_{\lambda g}(z)$ for some $z >z_*$ has at least two nonzero elements, say $\beta$ and $\gamma$. Then, one should have $q_{\lambda, z}(\beta)=q_{\lambda, z}(\gamma)$ and $\frac{d}{dx}q_{\lambda, z}(\beta)=\frac{d}{dx}q_{\lambda, z}(\gamma)=0$. The  intermediate value theorem implies that there exists another point between $\beta$ and $\gamma$, say $\tau$, at which $\frac{d}{dx}q_{\lambda, z}(\tau)=0$. However,  $\frac{d}{dx}q_{\lambda, z}(x) = \frac{1}{\lambda}(x-z)+\frac{1}{x+\epsilon}$ has at most two roots on $[0, \infty)$. We conclude that $\mathrm{prox}_{\lambda g}(z)$ is a singleton for all $z>z_*$. This completes the proof.
 \end{proof}

Before closing this section,  we present one property of $\mathrm{prox}_{\lambda g}$ that will be used in Section \ref{sec:Low-Rank}. Define $\mathbb{R}^n_\downarrow:=\{x \in \mathbb{R}^n: x_1 \ge x_2 \ge \cdots \ge x_n \ge 0\}$.
\begin{lemma}\label{lemma:f-order}
For any $\lambda>0$ and $\epsilon>0$, if $x \in \mathbb{R}^n_\downarrow$, then $\mathrm{prox}_{\lambda g}(x) \subset \mathbb{R}^n_\downarrow$.
\end{lemma}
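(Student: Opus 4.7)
The plan is to exploit the separability of the log-sum penalty acting componentwise and then apply the already-established monotonicity property in Lemma~\ref{lemma:z1z2} entry by entry. Because the vectorized objective $\tfrac{1}{2\lambda}\|y-x\|^2 + \sum_{i=1}^{n} g(y_i)$ decomposes as a sum over coordinates, a vector $y$ lies in $\prox_{\lambda g}(x)$ if and only if $y_i \in \prox_{\lambda g}(x_i)$ for each $i$. I would fix such a $y$ and prove $y_1 \ge y_2 \ge \cdots \ge y_n \ge 0$ one pair at a time.

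The non-negativity is immediate: for any index $i$, either $x_i = 0$, in which case $y_i = 0$ by the identity $\prox_{\lambda g}(0) = \{0\}$ recorded just after Lemma~\ref{lemma:ProxzSmallerz}, or $x_i > 0$, in which case Lemma~\ref{lemma:ProxzSmallerz}(ii) forces $y_i \in [0, x_i)$. Either way $y_i \ge 0$.

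For the monotonicity $y_i \ge y_{i+1}$, I would split on whether the inequality $x_i \ge x_{i+1}$ is strict. When $x_i > x_{i+1}$, Lemma~\ref{lemma:z1z2} applied with $u = x_{i+1}$ and $v = x_i$ gives $y_{i+1} \le y_i$ directly. The main obstacle is the tie case $x_i = x_{i+1}$: then $\prox_{\lambda g}(x_i)$ and $\prox_{\lambda g}(x_{i+1})$ coincide as sets, but Lemma~\ref{lemma:singleton} warns that this set can simultaneously contain $0$ and some $\alpha > 0$ exactly when the common value is the critical transition point $z_*$, so a Cartesian product selection could pick $y_i = 0$ and $y_{i+1} = \alpha$ in violation of the ordering. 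I would resolve this by observing that swapping $y_i$ and $y_{i+1}$ leaves both $\|y-x\|^{2}$ (because $x_i = x_{i+1}$) and $\sum_{k} g(y_k)$ invariant, so the swapped vector is again a minimizer; hence a sorted representative always lies in $\prox_{\lambda g}(x)$, which is the sense in which the inclusion into $\mathbb{R}^n_\downarrow$ holds. Iterating this pairwise argument along the coordinate order, or equivalently sorting the components once the per-coordinate selections are made, finishes the proof.
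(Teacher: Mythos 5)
Your proposal follows the same route as the paper's one-line proof --- reduce to coordinates via separability and invoke the monotonicity of Lemma~\ref{lemma:z1z2} on each adjacent pair --- but you go further and catch a genuine subtlety that the paper glosses over. Lemma~\ref{lemma:z1z2} requires the strict inequality $u<v$, so it says nothing about tied components, and in the tie case the literal inclusion $\mathrm{prox}_{\lambda g}(x)\subset\mathbb{R}^n_\downarrow$ can actually fail: if $\sqrt{\lambda}>\epsilon$ and two components of $x$ both equal the transition point $z_*$ of Proposition~\ref{prop:prox>epsilon}, then the Cartesian product contains the selection $(0,\,r_2(z_*))$ with $0<r_2(z_*)$, which is not in $\mathbb{R}^2_\downarrow$. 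Your swapping argument is the right repair; it establishes the statement that is actually needed downstream, namely that $\mathrm{prox}_{\lambda g}(x)\cap\mathbb{R}^n_\downarrow\neq\emptyset$ (a sorted representative always exists), which is exactly what the proof of Theorem~\ref{thm:prox-f-Low-Rank} uses to exhibit a feasible ordered vector $d$. So your proof is correct for this corrected reading of the lemma, and it identifies a (measure-zero but real) inaccuracy in the paper's statement; the paper's proof buys brevity at the cost of silently ignoring the multivalued point $z_*$.
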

\begin{proof}\ \
This is a direct consequence of Lemma~\ref{lemma:z1z2}.
 \end{proof}

\subsection{The Proximity Operators of $g$ and $f$}\label{subsub:1}
We are now prepared to compute the proximity operator of $g$. As above, the problem is split between two cases: $\sqrt{\lambda} \leq \epsilon$ and $\sqrt{\lambda} > \epsilon$, i.e., the convex case and the nonconvex case.  Due to Lemmas~\ref{lemma:ProxzSmallerz} and \ref{lemma:qconvexity},  $\mathrm{prox}_{\lambda g}$ is a single-valued operator when $\sqrt{\lambda}\le \epsilon$. More precisely, we have the following result.
\begin{proposition}\label{prop:prox<epsilon}
If $\sqrt{\lambda}\le \epsilon$, then
\begin{equation}
\mathrm{prox}_{\lambda g}(z) =\left\{
                                \begin{array}{ll}
                                  \{0\}, & \hbox{if $|z| \le \frac{\lambda}{\epsilon};$} \\
                                  \{\mathrm{sgn}(z) r_2(|z|)\}, & \hbox{if $|z| > \frac{\lambda}{\epsilon}$,}
                                \end{array}
                              \right.
\end{equation}
where $r_2$ is given in \eqref{def:r2}.
\end{proposition}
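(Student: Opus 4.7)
The proof essentially amounts to assembling the lemmas already in hand, since the hard analytic work (monotonicity behavior of $q_{\lambda,z}$ and the convexity of $q_{\lambda,z}$ on $[0,\infty)$ in the regime $\sqrt{\lambda}\le\epsilon$) has been carried out in Lemmas~\ref{lemma:qproperties} and \ref{lemma:qconvexity}. The plan is to reduce to the case $z\ge 0$, argue single-valuedness, and then read off the minimizer from the monotonicity picture.

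First I would invoke Lemma~\ref{lemma:ProxzSmallerz}(i) to restrict attention to $z\ge 0$, together with the observation \eqref{eq:prox-g-0} that $\prox_{\lambda g}(0)=\{0\}$, which matches the claim at $z=0$. For $z>0$, Lemma~\ref{lemma:ProxzSmallerz}(ii) shows that every element of $\prox_{\lambda g}(z)$ lies in $[0,z)$, so the minimization of $q_{\lambda,z}$ over $\mathbb{R}$ coincides with its minimization over $[0,\infty)$.

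Next I would use Lemma~\ref{lemma:qconvexity}: under the hypothesis $\sqrt{\lambda}\le\epsilon$, $q_{\lambda,z}$ is convex on $[0,\infty)$, hence has a unique minimizer there. So $\prox_{\lambda g}(z)$ is automatically a singleton. To identify this minimizer, I would split on the size of $z$ according to Lemma~\ref{lemma:qproperties}(i):
\begin{itemize}
\item If $0<z\le\lambda/\epsilon$, then $q_{\lambda,z}$ is increasing on $[0,\infty)$, so its unique minimizer on $[0,\infty)$ is $x=0$, giving $\prox_{\lambda g}(z)=\{0\}$.
\item If $z>\lambda/\epsilon$, then $q_{\lambda,z}$ is decreasing on $[0,r_2(z)]$ and increasing on $[r_2(z),\infty)$, so its unique minimizer is $x=r_2(z)$, giving $\prox_{\lambda g}(z)=\{r_2(z)\}$.
\end{itemize}

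Two small bookkeeping items remain: confirming the boundary value $z=\lambda/\epsilon$ is handled consistently (using $r_2(\lambda/\epsilon)=0$, which was already noted in the proof of Lemma~\ref{lemma:qproperties}, so the two pieces of the formula agree there) and extending the formula to negative $z$ by appealing once more to Lemma~\ref{lemma:ProxzSmallerz}(i), which produces the $\sign(z)\,r_2(|z|)$ form. I do not anticipate a real obstacle here; the only subtlety is to be careful that the case split in Lemma~\ref{lemma:qproperties}(i) is invoked correctly at the transition point $z=\lambda/\epsilon$, but since $r_2$ vanishes there, both branches of the case analysis yield the minimizer $0$.
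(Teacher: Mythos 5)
Your proposal is correct and follows essentially the same route as the paper's proof: reduce to $z\ge 0$ by symmetry and then read the unique minimizer of $q_{\lambda,z}$ off the monotonicity description in Lemma~\ref{lemma:qproperties}(i), with the two branches agreeing at $z=\lambda/\epsilon$ because $r_2(\lambda/\epsilon)=0$. The extra appeal to Lemma~\ref{lemma:qconvexity} for single-valuedness is harmless (the paper makes the same remark just before the proposition) but not strictly needed, since the increasing/decreasing picture already pins down a unique minimizer.
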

\begin{proof}\ \  Assume that $z \ge0$ in the following discussion.

By Lemma~\ref{lemma:qproperties}, when $z \in [0, \frac{\lambda}{\epsilon}]$, the function $q_{\lambda, z}$ is increasing on $[0, \infty)$, hence  $\mathrm{prox}_{\lambda g}(z)=\{0\}$; when $z \in [\frac{\lambda}{\epsilon}, \infty)$, $q_{\lambda, z}$ is decreasing $[0, r_2(z)]$ and increasing on $[r_2(z), \infty)$,  hence $\mathrm{prox}_{\lambda g}(z)=\{r_2(z)\}$.
 \end{proof}

Recall from Lemma \ref{lemma:singleton} that $\prox_{\lambda g}$ is single-valued except possibly at $\pm z_*$ for some $z_* \in \mathbb{R}$. As we will see in the next result, the point $z_*$ does exist and can be efficiently located when $\sqrt{\lambda}> \epsilon$. In this scenario, $\mathrm{prox}_{\lambda g}$ is described in the following result.
\begin{proposition}\label{prop:prox>epsilon}
If $\sqrt{\lambda}> \epsilon$,  then for any given $z\in \mathbb{R}$
\begin{equation}
\mathrm{prox}_{\lambda g} (z) =
\left\{
  \begin{array}{ll}
    \{0\}, & \hbox{if $|z| < z_*;$}\\
    \{0,\mathrm{sgn}(z) r_2(z_*)\}, & \hbox{if $|z|=z_*;$}\\
    \{\mathrm{sgn}(z) r_2(|z|)\} , & \hbox{if $|z| > z_*$},
  \end{array}
\right.
\end{equation}
$z_*$ is the root of the function
\begin{equation*}\label{def:r}
r(z):=q_{\lambda, z}(r_2(z))-q_{\lambda, z}(0)
\end{equation*}
on the interval $[2\sqrt{\lambda}-\epsilon, \frac{\lambda}{\epsilon}]$ and $r_2(z)$ is given in \eqref{def:r2}.
\end{proposition}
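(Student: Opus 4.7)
The plan is to localize the minimizer of $q_{\lambda, z}$ on $[0,\infty)$ using Lemma~\ref{lemma:qproperties}(ii), and then to decide between the two candidates for the global minimum using a scalar one-variable comparison. By Lemma~\ref{lemma:ProxzSmallerz}(i) it suffices to treat $z\ge 0$. Lemma~\ref{lemma:qproperties}(ii) gives a trichotomy for the shape of $q_{\lambda,z}$ on $[0,\infty)$: for $z \in [0, 2\sqrt{\lambda}-\epsilon]$ it is strictly increasing, so the only minimizer is $0$; for $z \in [\lambda/\epsilon, \infty)$ the boundary point $0$ is not a local minimum and the only candidate is the interior critical point $r_2(z)$; and for $z \in [2\sqrt{\lambda}-\epsilon, \lambda/\epsilon]$ both $0$ and $r_2(z)$ are local minima, so $\prox_{\lambda g}(z)$ is decided by the sign of $r(z) = q_{\lambda,z}(r_2(z)) - q_{\lambda,z}(0)$.

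The next step is to study $r$ on $[2\sqrt{\lambda}-\epsilon, \lambda/\epsilon]$. Because $r_2(z)$ is a critical point of $q_{\lambda,z}$ in the $x$-variable, an envelope-type differentiation using $\partial_x q_{\lambda,z}(r_2(z)) = 0$ from \eqref{eq:qderiv-2} yields
\begin{equation*}
r'(z) = \frac{\partial q_{\lambda,z}}{\partial z}(r_2(z)) - \frac{\partial q_{\lambda,z}}{\partial z}(0) = \frac{z - r_2(z)}{\lambda} - \frac{z}{\lambda} = -\frac{r_2(z)}{\lambda} < 0,
\end{equation*}
so $r$ is strictly decreasing on this interval. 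At the left endpoint, $r_2(2\sqrt{\lambda}-\epsilon) = \sqrt{\lambda}-\epsilon > 0$ and $q_{\lambda,z}$ is increasing on $[0,\infty)$ by Lemma~\ref{lemma:qproperties}(ii), so $r(2\sqrt{\lambda}-\epsilon) > 0$. At the right endpoint, $r_2(\lambda/\epsilon) = \lambda/\epsilon - \epsilon$, and a direct computation yields
\begin{equation*}
r(\lambda/\epsilon) = \frac{\epsilon^2}{2\lambda} + \log\frac{\lambda}{\epsilon^2} - \frac{\lambda}{2\epsilon^2}.
\end{equation*}
Setting $t := \lambda/\epsilon^2 > 1$ this becomes $h(t) := \tfrac{1}{2t} + \log t - \tfrac{t}{2}$, and since $h(1) = 0$ together with $h'(t) = -(t-1)^2/(2t^2) \le 0$, we conclude $h(t) < 0$, i.e., $r(\lambda/\epsilon) < 0$. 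The intermediate value theorem now produces a unique $z_* \in (2\sqrt{\lambda}-\epsilon, \lambda/\epsilon)$ with $r(z_*) = 0$.

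Assembling the cases: for $0 \le z \le 2\sqrt{\lambda}-\epsilon$ the minimum sits at $0$; for $2\sqrt{\lambda}-\epsilon < z < z_*$ we have $r(z) > 0$, so the minimum is still at $0$; at $z = z_*$ both $0$ and $r_2(z_*)$ achieve the minimum; for $z_* < z \le \lambda/\epsilon$ we have $r(z) < 0$, so the unique minimizer is $r_2(z)$; and for $z > \lambda/\epsilon$ the only candidate is $r_2(z)$ by Lemma~\ref{lemma:qproperties}(ii). Extending by symmetry via Lemma~\ref{lemma:ProxzSmallerz}(i) yields the claimed three-case formula. The main obstacle is the scalar inequality at $z = \lambda/\epsilon$, which is what pins $z_*$ strictly below the upper endpoint and ensures the middle case in the statement is genuinely realized; the rest of the argument is an organized case analysis driven by the monotonicity of $r$.
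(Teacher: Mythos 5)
Your proof is correct, and it reaches the same case structure as the paper's but settles the crucial middle interval $[2\sqrt{\lambda}-\epsilon,\lambda/\epsilon]$ by a genuinely different mechanism. The paper establishes uniqueness of the root $z_*$ of $r$ indirectly: it only checks the endpoint signs $r(2\sqrt{\lambda}-\epsilon)>0$ and $r(\lambda/\epsilon)<0$ (both by explicit computation, the first of which requires a small but unverified scalar inequality), gets existence from the intermediate value theorem, and then appeals to Lemma~\ref{lemma:singleton} --- i.e.\ to the set-valued monotonicity of $\prox_{\lambda g}$ from Lemma~\ref{lemma:z1z2} --- to rule out a second root. You instead prove that $r$ is strictly decreasing via the envelope identity $r'(z)=\partial_z q_{\lambda,z}(r_2(z))-\partial_z q_{\lambda,z}(0)=-r_2(z)/\lambda<0$ (valid since $r_2>0$ on the interval when $\sqrt{\lambda}>\epsilon$), which gives uniqueness and the sign pattern of $r$ on either side of $z_*$ in one stroke, without invoking the singleton lemma at all. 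Your treatment of the endpoints is also tighter: the left endpoint is handled softly from the monotonicity of $q_{\lambda,2\sqrt{\lambda}-\epsilon}$ together with $r_2(2\sqrt{\lambda}-\epsilon)=\sqrt{\lambda}-\epsilon>0$, avoiding the paper's explicit formula, and at the right endpoint you actually verify $h(t)=\tfrac{1}{2t}+\log t-\tfrac{t}{2}<0$ for $t>1$, a detail the paper asserts without proof. The only point worth a parenthetical remark is that $r_2$ is not differentiable at $z=2\sqrt{\lambda}-\epsilon$ (its derivative blows up there), so your formula for $r'$ holds on the open interval $(2\sqrt{\lambda}-\epsilon,\lambda/\epsilon]$; continuity of $r$ on the closed interval then extends the strict monotonicity, so nothing is lost. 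Net effect: your argument is more self-contained (it does not need Lemmas~\ref{lemma:z1z2} and \ref{lemma:singleton} for this proposition), while the paper's route reuses machinery it has already built and will use elsewhere.
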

\begin{proof} \ \  As before, we restrict our attention to $z \geq 0$.

By Lemma~\ref{lemma:qproperties}, when $z \in [0,  2\sqrt{\lambda}-\epsilon]$, $q_{\lambda, z}$ is increasing on $[0, \infty)$, hence $\mathrm{prox}_{\lambda g} (z)=\{0\}$; when $z \in [\frac{\lambda}{\epsilon}, \infty)$, $q_{\lambda, z}$ is decreasing on $[0, r_2(z)]$ and increasing on $[r_2(z), \infty)$, hence $\mathrm{prox}_{\lambda g} (z)=\{r_2(z)\}$.

Next, we focus on the situation of $z \in [2\sqrt{\lambda}-\epsilon, \frac{\lambda}{\epsilon}]$. By Lemma~\ref{lemma:qproperties}, $q_{\lambda, z}$ is increasing $[0, r_1(z)]$, decreasing on $[r_1(z),r_2(z)]$, and increasing on $[r_2(z), \infty)$. Hence the elements of $\mathrm{prox}_{\lambda g} (z)$ must be $0$, $r_2(z)$, or both. To determine when the origin and/or $r_2$ are the minimizers of $q_{\lambda, z}$, we take a closer look at the function $r(z)$ in \eqref{def:r} for $2\sqrt{\lambda}-\epsilon < z \le \frac{\lambda}{\epsilon}$. One can check directly that
$$
r(2\sqrt{\lambda}-\epsilon)=-\log\left(\frac{\epsilon}{\sqrt{\lambda}}\right)+\left(\frac{\epsilon^2}{2\lambda}+\frac{2\epsilon}{\sqrt{\lambda}}-\frac{3}{2}\right)>0
$$
and
$$
r\left(\frac{\lambda}{\epsilon}\right)=\frac{\epsilon^2}{2\lambda}+\log\left(\frac{\lambda}{\epsilon^2}\right) -\frac{\lambda}{2\epsilon^2}<0
$$
whenever $\sqrt{\lambda}> \epsilon$. Hence, the function $r$ has at least one root $z_* \in (2\sqrt{\lambda}-\epsilon, \frac{\lambda}{\epsilon})$. This means that $\{0, r_2(z_*)\}=\mathrm{prox}_{\lambda g} (z_*)$. By Lemma~\ref{lemma:singleton}, $z_*$ is the only root of $r$ on the interval $[2\sqrt{\lambda}-\epsilon, \frac{\lambda}{\epsilon}]$. Furthermore, we conclude that $\mathrm{prox}_{\lambda g} (z)=\{0\}$ when $z \in [2\sqrt{\lambda}-\epsilon, z_*)$ and $\mathrm{prox}_{\lambda g} (z)=\{r_2(z)\}$ when $z \in (z_*,\frac{\lambda}{\epsilon}]$.
 \end{proof}

From the above proof, we know that $z_*$ is the unique root of $r$ on $[2\sqrt{\lambda}-\epsilon, \frac{\lambda}{\epsilon}]$ and depends on parameters $\lambda$ and $\epsilon$ only. Furthermore, from $r(2\sqrt{\lambda}-\epsilon) r(\frac{\lambda}{\epsilon})<0$, $z_*$ can easily be found by the bisection method. We further remark that the $z_*$ is simply given as $2\sqrt{\lambda}-\epsilon$ in \cite{Xia-Wang-Meng-Chai-Liang:JCM-2017}, which clearly is incorrect.

Figure~\ref{fig1} displays the proximity operator $\mathrm{prox}_{\lambda g}$ for two choices of $(\lambda,\epsilon)$. Figure~\ref{fig1} (a) depicts $\mathrm{prox}_{\lambda g}$ for $\sqrt{\lambda} \le \epsilon$ with $(\lambda,\epsilon)=(2,3)$ as in Proposition~\ref{prop:prox<epsilon}. Figure~\ref{fig1} (b) depicts $\mathrm{prox}_{\lambda g}$ for $\sqrt{\lambda} > \epsilon$ with $(\lambda,\epsilon)=(3,1)$, corresponding to Proposition~\ref{prop:prox>epsilon}. In both situations, $\mathrm{prox}_{\lambda g}(z)=\{0\}$ for $z$ in a neighborhood of the origin, thus $g$ is a sparsity promoting function as defined in \cite{Shen-Suter-Tripp:JOTA:2019}.
\begin{figure}
\centering
\begin{tabular}{cc}
\includegraphics[width=2.0in]{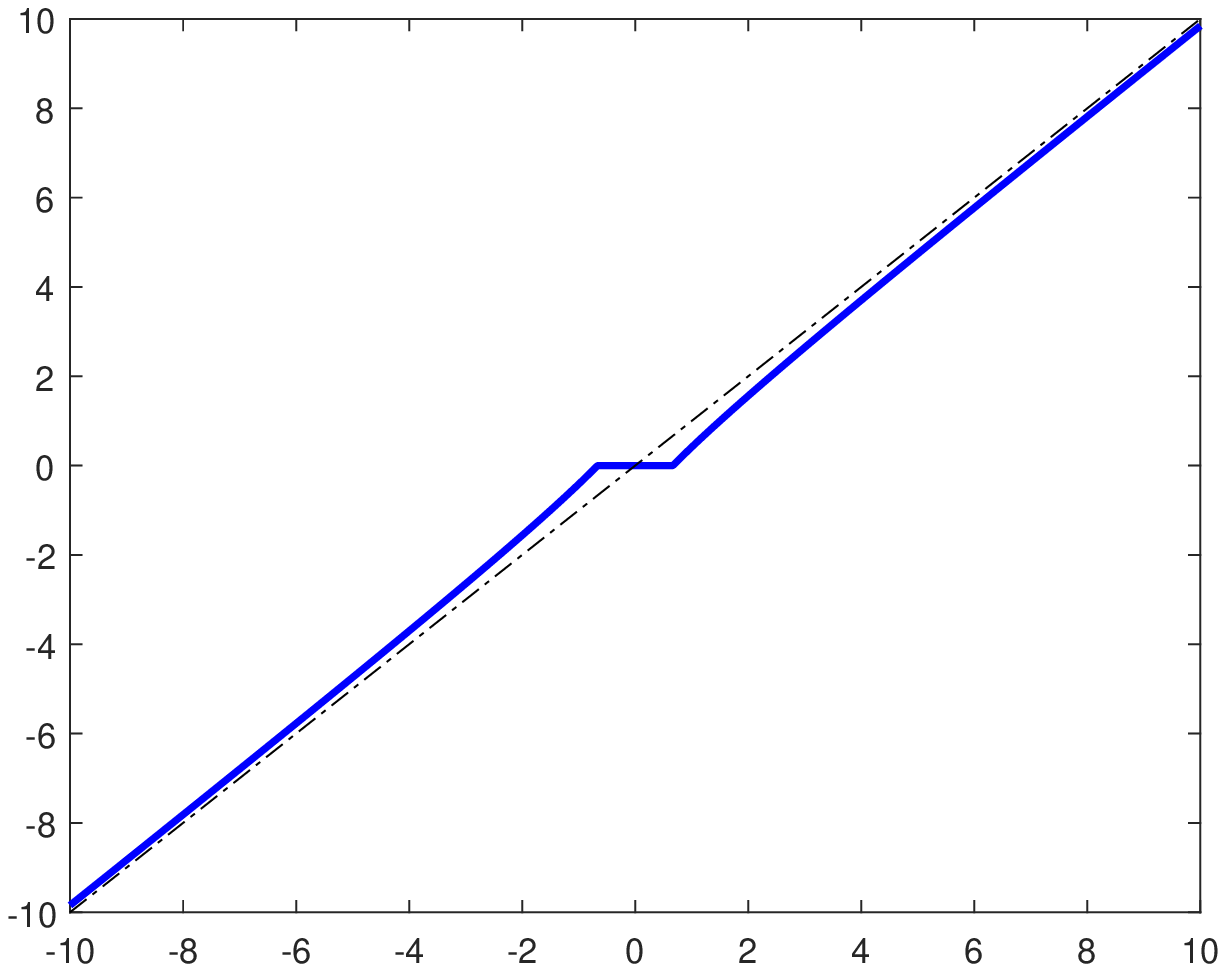}&
\includegraphics[width=2.0in]{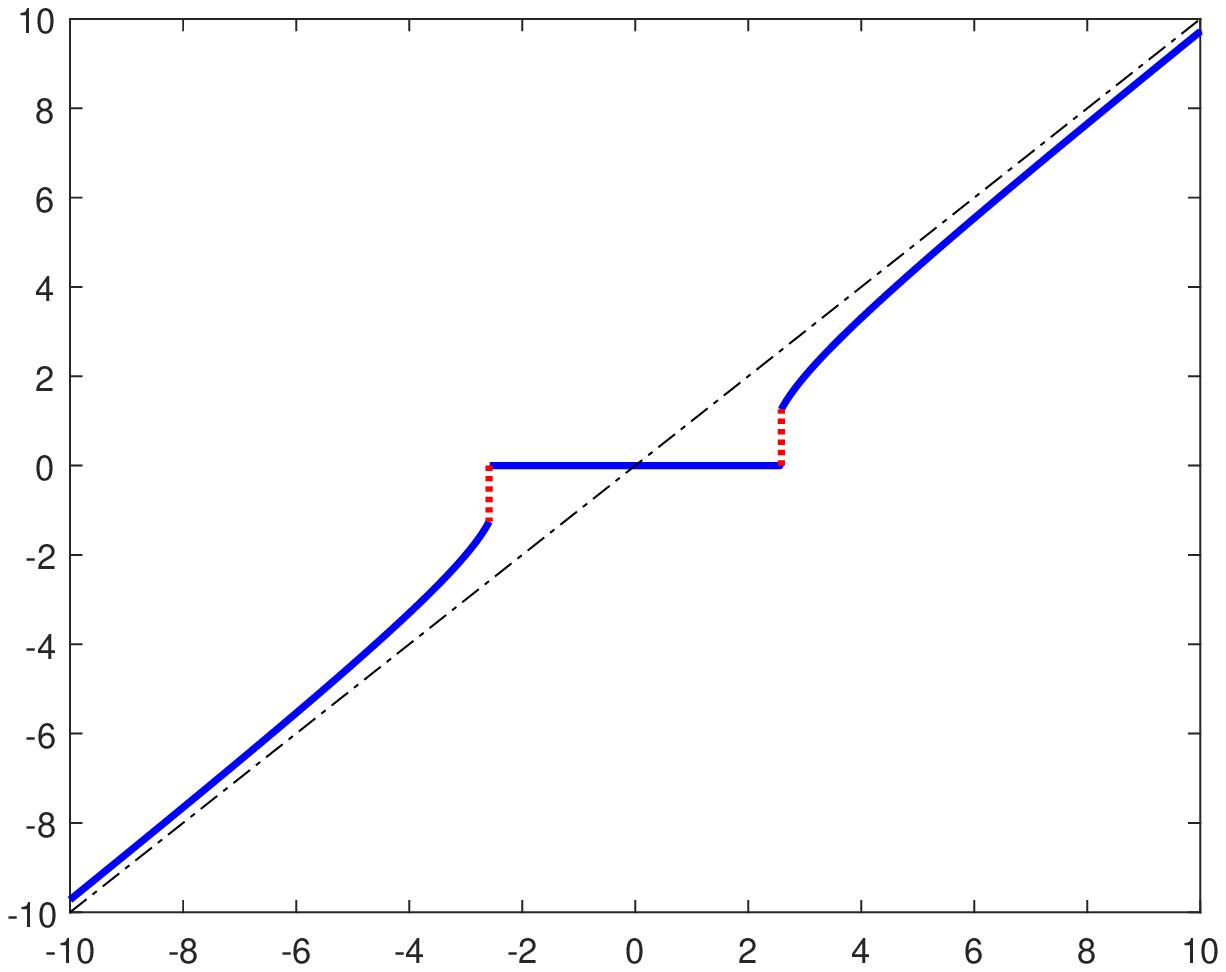}\\
(a)&(b)
\end{tabular}
\caption{The graphs (solid lines) of $\mathrm{prox}_{\lambda g}$ for (a) $\sqrt{\lambda} \le \epsilon$ with {$(\lambda,\epsilon)=(2,3)$} and (b) $\sqrt{\lambda} > \epsilon$ with {$(\lambda,\epsilon)=(3,1)$}. The dotted lines are the graph of the identity mapping.}
\label{fig1}
\end{figure}

Both Propositions~\ref{prop:prox<epsilon} and \ref{prop:prox>epsilon} show that for large enough $|z|$, $r_2(|z|)$ the absolute value of the only element of $\mathrm{prox}_{\lambda g}(z)$ has 
$$
r_2(|z|) \approx |z|-\frac{\lambda}{(|z|+\epsilon)}-\frac{\lambda^2}{(|z|+\epsilon)^3}
$$
through Taylor's expansion the term $\sqrt{1-\frac{4\lambda}{(|z|+\epsilon)^2}}$ in  $\sqrt{\frac{1}{4}(|z|+\epsilon)^2-\lambda}=\frac{1}{2}(|z|+\epsilon)\sqrt{1-\frac{4\lambda}{(|z|+\epsilon)^2}}$.  This indicates that the operator $\mathrm{prox}_{\lambda g}$ is nearly unbiased for large values~\cite{Fan-Li:JASA:01,Shen-Suter-Tripp:JOTA:2019}, which supports the use of $g$ in applications to replace the $\ell_0$ norm. We are not aware of any existing  work quantitatively explaining it in this way. Figure~\ref{fig1} further illustrates this claim.

We are ready to present the solution to problem~\eqref{eq:prox-f-CS}. Recall from \eqref{eq:f-g} that $\prox_{\lambda f}$ acts on separately each coordinate of $z$: $$\prox_{\lambda f}(z) = \prox_{\lambda g}(z_1) \times \cdots \times \prox_{\lambda g}(z_n),$$ where $f(z) = \sum_{i=1}^n g(z_i)$ and $z = (z_1, \dots, z_n)$.

\begin{theorem}\label{thm:prox-f-CS}
For each $z \in \mathbb{R}^n$, $\epsilon>0$, and $\lambda \le 0$, the set $\mathrm{prox}_{\lambda f}(z)$ collects all minimizers to problem~\eqref{eq:prox-f-CS}. Moreover, if $x^\star \in \mathrm{prox}_{\lambda f}(z)$, then $x^\star_i \in \mathrm{prox}_{\lambda g}(z_i)$, $i=1,2,\ldots,n$.
\end{theorem}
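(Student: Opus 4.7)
The plan is to use the separability of the objective in \eqref{eq:prox-f-CS}. First I would rewrite the objective as a sum over coordinates: since $\|x-z\|^2 = \sum_{i=1}^n (x_i - z_i)^2$ and $f(x) = \sum_{i=1}^n g(x_i)$, the objective equals
\begin{equation*}
\frac{1}{2\lambda}\|x-z\|^2 + f(x) = \sum_{i=1}^n \left(\frac{1}{2\lambda}(x_i - z_i)^2 + g(x_i)\right) = \sum_{i=1}^n q_{\lambda, z_i}(x_i).
\end{equation*}

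Next I would observe that each summand depends on only one coordinate of $x$, and the feasible set $\mathbb{R}^n$ is the product of its coordinate copies of $\mathbb{R}$. Therefore the infimum of the sum equals the sum of the coordinatewise infima, and a point $x^\star$ achieves the overall infimum if and only if $x^\star_i$ minimizes $q_{\lambda, z_i}$ on $\mathbb{R}$ for each $i$, i.e., $x^\star_i \in \mathrm{prox}_{\lambda g}(z_i)$. This immediately yields both conclusions of the theorem: the set of all minimizers is precisely the Cartesian product $\mathrm{prox}_{\lambda g}(z_1)\times\cdots\times\mathrm{prox}_{\lambda g}(z_n)$, and any component of a minimizer lies in the corresponding one-dimensional proximity set. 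Existence of minimizers is guaranteed because $\mathrm{prox}_{\lambda g}(z_i)$ is nonempty for every $z_i$ (as noted earlier in Subsection~\ref{subsub:0}, owing to continuity and coercivity of $g$).

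There is really no main obstacle here beyond cleanly invoking separability; the work has already been done in characterizing $\mathrm{prox}_{\lambda g}$ through Propositions~\ref{prop:prox<epsilon} and \ref{prop:prox>epsilon}. The only mild subtlety is that $\mathrm{prox}_{\lambda g}(z_i)$ may contain more than one element (precisely at $|z_i| = z_*$ in the nonconvex regime), so $\mathrm{prox}_{\lambda f}(z)$ can likewise be multi-valued, with cardinality given by the product of the coordinate cardinalities. I would remark on this explicitly to highlight how combinatorial multiplicity arises, and also note that the displayed condition on $\lambda$ in the statement appears to be a typo for $\lambda > 0$ (the case $\lambda = 0$ is degenerate since $\frac{1}{2\lambda}$ is undefined).
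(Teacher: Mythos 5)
Your proof is correct and follows essentially the same route as the paper, which likewise invokes the separability $f(x)=\sum_i g(x_i)$ and the definition of the proximity operator (you simply spell out the coordinatewise decomposition and the nonemptiness of each $\mathrm{prox}_{\lambda g}(z_i)$ in more detail). Your observation that the condition $\lambda \le 0$ in the statement must be a typo for $\lambda > 0$ is also correct.
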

\begin{proof} \ \
The results follow immediately from the relation \eqref{eq:f-g} and the definition of proximity operator. Since  $\mathrm{prox}_{\lambda g}$ has explicit expression, so does $\mathrm{prox}_{\lambda f}$.
 \end{proof}

\section{The Reweighted $\ell_1$ Algorithm May Fail for $\bm{ \mathrm{prox}_{\lambda g}}$}\label{sec:rwl1}
In the existing work, such as \cite{Candes-Wakin-Boyd:JFAA:08,Dong-Shi-Li-Ma-Huang:IEEEIP:2014}, the iteratively reweighted $\ell_1$ method is adopted to evaluate $\mathrm{prox}_{\lambda g}$. This iterative algorithm is in the fashion of classical majorization-minimization algorithms: it generates and solves a sequence of convex optimization problems. At each iteration, the non-convex function $g$ approximated by means of a majorizing convex surrogate function. More precisely, assume that $\x{k}$ is the value of the current solution. The chosen convex surrogate function of $g$ is $g(\x{k})+\frac{|x|-|\x{k}|}{\epsilon+|\x{k}|}$ the first-order approximation of $g(x)$ at $\x{k}$. With it, the next iteration $x^{(k+1)}$ is
$$
x^{(k+1)} = \argmin\left\{\frac{1}{2\lambda} (x-z)^2 + g(\x{k}) + \frac{|x|-|\x{k}|}{\epsilon+|\x{k}|}: x \in \mathbb{R}\right\},
$$
which can be viewed as a reweighted $\ell_1$-minimization problem. This algorithm can be written as
\begin{equation}\label{eq:iterative}
x^{(k+1)} = \mathrm{prox}_{ \frac{\lambda}{\epsilon+|\x{k}|}|\cdot|}(z)
=\left\{
\begin{array}{ll}
0,& \hbox{$|z| \le \frac{\lambda}{\epsilon+|\x{k}|}$;}\\[8 pt]
\mathrm{sgn}(z) (|z|-\frac{\lambda}{\epsilon+|\x{k}|}), & \hbox{$|z| > \frac{\lambda}{\epsilon+|\x{k}|}$.} \\
\end{array}
\right.
\end{equation}

We will see that the sequence $\{\x{k}\}$  generated by the iterative scheme \eqref{eq:iterative} is always convergent and its limit depends on the initial guess $\x{0}$ and the relation of $z$ with the parameters $\lambda$ and $\epsilon$. Lemmas~\ref{lemma:conv1} - \ref{lemma:conv6} in the following describe the possible convergence behavior of \eqref{eq:iterative}. This is then compared to the true solution in Theorems~\ref{thm:alg-accuracy-1} and \ref{thm:alg-accuracy}. In particular, we identify the intervals where \eqref{eq:iterative} will not achieve the true solution. These intervals are explicitly determined in terms of the initial guess $\x{0}$ and parameters $\lambda$ and $\epsilon$.

\begin{lemma}\label{lemma:conv1}
Let the sequence $\{\x{k}\}$ be generated by the iterative scheme \eqref{eq:iterative} for a given $z>0$ and an initial guess $\x{0} \ge 0$. Suppose that  $\x{k} > 0$ for all $k \ge 1$. Then the sequence $\{\x{k}\}$ is increasing (resp. decreasing) if $\x{1}>\x{0}$ (resp. $\x{1}<\x{0}$); this sequence is constant if $\x{1}=\x{0}$.
\end{lemma}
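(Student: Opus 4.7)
The plan is to reduce the iteration to a clean scalar recursion of the form $\x{k+1} = h(\x{k})$ and then exploit the monotonicity of $h$. Because $\x{k} > 0$ for every $k \ge 1$, the value $\x{k+1}$ produced by \eqref{eq:iterative} must come from the nonzero branch: we have $z > \frac{\lambda}{\epsilon + \x{k}}$ for every $k \ge 0$ (the $k=0$ case is forced by the hypothesis $\x{1} > 0$ together with $\x{0} \ge 0$). Since $z > 0$ and $\x{k} \ge 0$, the absolute values in \eqref{eq:iterative} can be dropped, so for every $k \ge 0$,
\[
\x{k+1} \;=\; z - \frac{\lambda}{\epsilon + \x{k}} \;=\; h(\x{k}), \qquad \text{where } h(t) := z - \frac{\lambda}{\epsilon + t}.
\]

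Next I would observe that $h$ is strictly increasing on $[0,\infty)$: a direct calculation gives $h'(t) = \frac{\lambda}{(\epsilon + t)^2} > 0$.

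The conclusion then follows by induction on $k$. If $\x{1} > \x{0}$, assume $\x{k} > \x{k-1}$; applying the increasing map $h$ yields $\x{k+1} = h(\x{k}) > h(\x{k-1}) = \x{k}$, so $\{\x{k}\}$ is strictly increasing. The case $\x{1} < \x{0}$ is symmetric, with the reverse inequality propagated by the same monotonicity argument. Finally, $\x{1} = \x{0}$ means $\x{0}$ is a fixed point of $h$, whence $\x{k} = \x{0}$ for all $k \ge 0$ and the sequence is constant.

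The only delicate step is the reduction in the first paragraph: the positivity hypothesis $\x{k} > 0$ for $k \ge 1$ is precisely what is needed so that no iterate is produced by the soft-thresholded ``zero branch'' of \eqref{eq:iterative}, ensuring that the clean scalar recursion $\x{k+1} = h(\x{k})$ holds at every step. Once that reduction is in place, the rest is a one-line induction against a monotone scalar map, and the stated trichotomy follows immediately.
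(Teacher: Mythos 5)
Your proposal is correct and follows essentially the same route as the paper: the paper writes $x^{(k+1)}-x^{(k)}=\frac{\lambda\,(x^{(k)}-x^{(k-1)})}{(\epsilon+x^{(k)})(\epsilon+x^{(k-1)})}$, which is exactly your observation that the iteration map $h(t)=z-\frac{\lambda}{\epsilon+t}$ is increasing, just written as an explicit difference identity rather than via $h'>0$ and induction. Your handling of the $k=0$ reduction (using $x^{(1)}>0$ to rule out the zero branch) is a detail the paper glosses over, and it is correct.
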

\begin{proof} Since $\x{k} > 0$ for all $k \ge 1$, one has from the iterative scheme \eqref{eq:iterative} that $x^{(k+1)}=z-\frac{\lambda}{\epsilon+\x{k}}$ for all $k \ge 0$. Therefore,
$$
x^{(k+1)}-\x{k}= \frac{\lambda (\x{k}-x^{(k-1)})}{(\epsilon+\x{k})(\epsilon+x^{(k-1)})}=\frac{\lambda^k (\x{1}-\x{0})}{\Pi_{i=1}^k ((\epsilon+x^{(i)})(\epsilon+x^{(i-1)}))}.
$$
All statements immediately follow from the above equation.
 \end{proof}

\begin{lemma}\label{lemma:conv2}
Let the sequence $\{\x{k}\}$ be generated by the iterative scheme \eqref{eq:iterative} for a given $z\ge 0$ and an initial guess $\x{0} \ge 0$. If there exists $k_0 \ge 0$, such that $x^{(k_0)}=0$, then the sequence $\{\x{k}\}$ converges to $0$ if $z \le \frac{\lambda}{\epsilon}$ and to $r_2(z)$ if $z>\frac{\lambda}{\epsilon}$.
\end{lemma}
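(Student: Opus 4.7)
The proof naturally splits at the dichotomy $z \le \lambda/\epsilon$ versus $z > \lambda/\epsilon$, following the two branches in \eqref{eq:iterative}. In the easy case $z \le \lambda/\epsilon$, starting from $x^{(k_0)}=0$ the threshold in \eqref{eq:iterative} equals $\lambda/\epsilon \ge z$, so the first branch fires and $x^{(k_0+1)}=0$. A one-line induction then gives $\x{k}=0$ for all $k \ge k_0$, so the sequence converges to $0$.

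The substantive case is $z > \lambda/\epsilon$. Here $\lambda/\epsilon < z$, so the second branch of \eqref{eq:iterative} fires at step $k_0$, yielding $x^{(k_0+1)} = z - \lambda/\epsilon > 0$. I would then identify the tail of the iteration with the fixed-point iteration of
\[
T(x) := z - \frac{\lambda}{\epsilon+x}.
\]
Note $T$ is strictly increasing on $[0,\infty)$ since $T'(x)=\lambda/(\epsilon+x)^2 > 0$. The equation $x = T(x)$ rearranges to $x^2 + (\epsilon-z)x + \lambda - \epsilon z = 0$, whose discriminant is exactly $(z+\epsilon)^2 - 4\lambda$, so its roots are precisely $r_1(z)$ and $r_2(z)$ from \eqref{def:r1} and \eqref{def:r2}. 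For $z > \lambda/\epsilon$, a short direct check (squaring $z-\epsilon < \sqrt{(z+\epsilon)^2-4\lambda}$) confirms $r_1(z)<0<r_2(z)$. Since $T$ is increasing and fixes $r_2(z)$, we obtain $T\bigl([0,r_2(z))\bigr)\subseteq [z-\lambda/\epsilon,\, r_2(z))\subseteq [0,r_2(z))$.

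I would then argue by induction that $\x{k} \in [x^{(k_0+1)}, r_2(z))$ for every $k \ge k_0+1$, so in particular $\x{k}>0$ for all $k\ge k_0+1$ and the iteration stays in the non-thresholding branch, giving $x^{(k+1)} = T(\x{k})$. Since $x^{(k_0+1)} = T(0) = T(x^{(k_0)}) > x^{(k_0)} = 0$, Lemma~\ref{lemma:conv1} applied to the shifted sequence (with $x^{(k_0)}$ playing the role of the initial guess) implies $\{\x{k}\}_{k\ge k_0}$ is non-decreasing. A non-decreasing sequence bounded above by $r_2(z)$ converges to some $x^\star \in (0, r_2(z)]$; passing to the limit in $x^{(k+1)}=T(\x{k})$ using continuity of $T$ on $[0,\infty)$ yields $x^\star = T(x^\star)$, so $x^\star \in \{r_1(z),r_2(z)\}$, and since $r_1(z)<0\le x^\star$ we conclude $x^\star = r_2(z)$.

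The main obstacle is verifying the invariance $T([0,r_2(z)))\subseteq [0,r_2(z))$, which hinges on confirming $z > \lambda/\epsilon \Rightarrow r_1(z)<0<r_2(z)$ and on using the monotonicity of $T$ together with $T(r_2(z))=r_2(z)$; the rest is bookkeeping. The only other minor pitfall is the application of Lemma~\ref{lemma:conv1}, whose stated hypothesis requires $\x{k}>0$ for $k\ge 1$: since the sequence is positive only from index $k_0+1$ onward, the lemma must be invoked on the tail, treating $x^{(k_0)}=0$ as the new initial guess and $x^{(k_0+1)}$ as the new first iterate.
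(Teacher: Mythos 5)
Your proposal is correct and follows essentially the same route as the paper: reduce to $x^{(k_0)}=0$, handle $z\le\lambda/\epsilon$ by trivial induction, and for $z>\lambda/\epsilon$ show the tail is positive and increasing via Lemma~\ref{lemma:conv1}, bounded above (the paper simply uses $x^{(k)}<z$ where you use invariance of $[0,r_2(z))$ under $T$), hence convergent to a positive fixed point of $x=z-\lambda/(\epsilon+x)$, which must be $r_2(z)$.
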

\begin{proof}\ \ Without loss of generality, let us assume $k_0=0$, i.e, $\x{0}=0$. We have
$$
\x{1}=\left\{
          \begin{array}{ll}
            0, & \hbox{if $z\le \frac{\lambda}{\epsilon}$;} \\
            z-\frac{\lambda}{\epsilon}, & \hbox{if $z> \frac{\lambda}{\epsilon}$.}
          \end{array}
        \right.
$$
Obviously, if $z\le \frac{\lambda}{\epsilon}$, $\x{k}=0$ for all $k\ge 0$, that is,  $\{\x{k}\}$ converges to $0$. If $z> \frac{\lambda}{\epsilon}$, then $\x{1}>\x{0}=0$, yielding $x^{(k+1)}=z-\frac{\lambda}{\epsilon+\x{k}}>0$ for all $k\ge 0$.  So $\{\x{k}\}$ is increasing by Lemma~\ref{lemma:conv1} and converges, say to a positive number $x^{\infty}$, due to $0\le \x{k} < z$ for all $k\ge 0$. We have
$x^{\infty}=z-\frac{\lambda}{\epsilon+x^{\infty}}$. So $x^{\infty}$ must be $r_2(z)$ for $z>\frac{\lambda}{\epsilon}$.
 \end{proof}

The next identity is useful in the following discussion. For given $\x{0} \ge 0$ and $z >0$, if $\x{1}=z-\frac{\lambda}{\epsilon+\x{0}}>0$, then
\begin{equation}\label{eq:important}
\x{1}-\x{0}=\left\{
                  \begin{array}{ll}
                    -\frac{1}{\epsilon+\x{0}}((\x{0}-\frac{1}{2}(z-\epsilon))^2+ (\lambda-\frac{1}{4}(z+\epsilon)^2)), & \hbox{if $z < 2\sqrt{\lambda}-\epsilon$;} \\
                    -\frac{1}{\epsilon+\x{0}}(\x{0}-r_1(z))(\x{0}-r_2(z)), & \hbox{if $z \ge 2\sqrt{\lambda}-\epsilon$,}
                  \end{array}
                \right.
\end{equation}
where $r_1(z)$ and $r_2(z)$ are given in \eqref{def:r1} and \eqref{def:r2}, respectively.

\begin{lemma}\label{lemma:conv3}
Let the sequence $\{\x{k}\}$ be generated by the iterative scheme \eqref{eq:iterative} for a given $z\ge \frac{\lambda}{\epsilon}$ and an initial guess $\x{0} > 0$. Then, $\x{k} >0$ for all $k \ge 0$ and the sequence $\{\x{k}\}$ converges to $r_2(z)$.
\end{lemma}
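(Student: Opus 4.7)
The plan is to first establish the positivity claim by a short induction, then use the identity \eqref{eq:important} and Lemma~\ref{lemma:conv1} to pin down monotonicity and a limit.

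For the positivity, note that whenever $\x{k} > 0$, the threshold $\lambda/(\epsilon+\x{k})$ is strictly less than $\lambda/\epsilon \le z$, so the bottom branch of \eqref{eq:iterative} applies and $x^{(k+1)} = z - \lambda/(\epsilon+\x{k})$. For this to vanish we would need $\x{k} = \lambda/z - \epsilon \le 0$ (since $z \ge \lambda/\epsilon$), contradicting $\x{k} > 0$. Hence $x^{(k+1)} > 0$, and by induction $\x{k} > 0$ for all $k \ge 0$.

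Once positivity is in hand, the iteration is exactly $\x{k+1} = T(\x{k})$ with $T(x) := z - \lambda/(\epsilon+x)$, and $T$ is strictly increasing on $(-\epsilon, \infty)$. Since $z \ge \lambda/\epsilon \ge 2\sqrt{\lambda}-\epsilon$, the second branch of \eqref{eq:important} gives
\begin{equation*}
\x{1} - \x{0} \;=\; -\frac{1}{\epsilon + \x{0}}\bigl(\x{0} - r_1(z)\bigr)\bigl(\x{0} - r_2(z)\bigr).
\end{equation*}
A short computation (as in Section~\ref{subsub:0}) shows that $z \ge \lambda/\epsilon$ forces $r_1(z) \le 0 \le r_2(z)$, so $\x{0} > r_1(z)$. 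The sign of $\x{1}-\x{0}$ is therefore controlled entirely by the sign of $r_2(z)-\x{0}$. Splitting into the cases $\x{0} < r_2(z)$, $\x{0} = r_2(z)$, and $\x{0} > r_2(z)$, Lemma~\ref{lemma:conv1} then yields a monotone sequence (increasing, constant, or decreasing, respectively).

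To finish, I would show boundedness and identify the limit. Because $T$ is increasing and $r_2(z)$ is a fixed point, $\x{0} \le r_2(z)$ (resp.\ $\x{0} \ge r_2(z)$) is preserved under $T$, so the monotone sequence $\{\x{k}\}$ is bounded by $r_2(z)$ on one side and by $\x{0}$ on the other. Hence it converges to some $x^\infty \ge 0$ satisfying $x^\infty = z - \lambda/(\epsilon + x^\infty)$, whose only nonnegative solution for $z \ge \lambda/\epsilon$ is $r_2(z)$ (the other root being $r_1(z) \le 0$ and $x^\infty > 0$ since $\x{k} > 0$ and monotone toward a nonnegative bound, with the equality case $z=\lambda/\epsilon$, $\sqrt{\lambda}\le\epsilon$ giving $r_2(z)=0$ as the decreasing limit). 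The only subtle point is making sure the trichotomy via \eqref{eq:important} and the ordering $r_1(z) \le 0 \le r_2(z)$ are correctly handled when $z=\lambda/\epsilon$ (where $r_1$ or $r_2$ may equal $0$); this is the only bookkeeping obstacle, and it is resolved directly from the formulas \eqref{def:r1}--\eqref{def:r2}.
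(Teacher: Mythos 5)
Your proposal is correct and follows essentially the same route as the paper's proof: positivity by induction, the sign of $\x{1}-\x{0}$ from \eqref{eq:important} combined with Lemma~\ref{lemma:conv1} to get monotonicity, and identification of the limit as the nonnegative fixed point $r_2(z)$ of $x\mapsto z-\lambda/(\epsilon+x)$, with the boundary case $z=\lambda/\epsilon$ handled via the sign of $\sqrt{\lambda}-\epsilon$. The only cosmetic difference is that you organize the case analysis by the trichotomy of $\x{0}$ against $r_2(z)$ after noting $r_1(z)\le 0\le r_2(z)$ uniformly, whereas the paper first splits on whether $r_2(z)$ is positive or zero; the content is the same.
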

\begin{proof} \ \ For $z\ge \frac{\lambda}{\epsilon}$ and $\x{0} > 0$, we know $\x{1}=z-\frac{\lambda}{\epsilon+\x{0}}>0$. As a consequence, it also implies $\x{k} >0$ for all $k \ge 0$. To show the convergence of the sequence $\{\x{k}\}$, we compare the values of $\x{0}$ and $\x{1}$ from \eqref{eq:important} in order to infer the monotonicity of the sequence based on Lemma~\ref{lemma:conv1}. To this end, our discussion is conducted for two cases: (i) $z> \frac{\lambda}{\epsilon}$ or $z= \frac{\lambda}{\epsilon}$ and $\sqrt{\lambda}>\epsilon$; and (ii) $z= \frac{\lambda}{\epsilon}$ and $\sqrt{\lambda} \le \epsilon$. The following facts are useful: $r_1(z) < 0 < r_2(z)$ for all $z> \frac{\lambda}{\epsilon}$, and
$$
r_1\left(\frac{\lambda}{\epsilon}\right)=\left\{
                                \begin{array}{ll}
                                  \frac{\lambda}{\epsilon}-\epsilon, & \hbox{if $\sqrt{\lambda} \le \epsilon$;} \\
                                  0, & \hbox{if $\sqrt{\lambda} > \epsilon$}
                                \end{array}
                              \right. \quad \mbox{and} \quad
r_2\left(\frac{\lambda}{\epsilon}\right)=\left\{
                                \begin{array}{ll}
                                  0, & \hbox{if $\sqrt{\lambda} \le \epsilon$;} \\
                                  \frac{\lambda}{\epsilon}-\epsilon, & \hbox{if $\sqrt{\lambda} > \epsilon$.}
                                \end{array}
                              \right.
$$

Case (i): $z> \frac{\lambda}{\epsilon}$ or $z= \frac{\lambda}{\epsilon}$ and $\sqrt{\lambda}>\epsilon$.  Hence, $r_2(z)>0$ and $r_2(z)$ is the only positive solution of $x=z-\frac{\lambda}{\epsilon+x}$.  
\begin{itemize}
\item If $\x{0} \in (0, r_2(z))$, one has  $\x{1} > \x{0}$ from~\eqref{eq:important}. We can conclude that $x^{(k+1)} > \x{k}$ for all $k \ge 0$ and the sequence $\{\x{k}\}$ converges, say to $x^{\infty}$, which is a positive number satisfying $x^{\infty}=z-\frac{\lambda}{\epsilon+x^{\infty}}$. So $x^{\infty}=r_2(z)$.

\item If $\x{0}=r_2(z)>0$, then  $x^{(k+1)}=r_2(z)$ for all $k \ge 0$. Hence, the limit of the sequence $\{\x{k}\}$ is $r_2(z)$.

\item If $\x{0} \in (r_2(z), \infty)$, then  $\x{1} < \x{0}$ from~\eqref{eq:important}. We conclude  $x^{(k+1)} < \x{k}$ for all $k \ge 0$ and the sequence $\{\x{k}\}$ converges, say to $x^{\infty} \ge 0$, satisfying  $x^{\infty}=z-\frac{\lambda}{\epsilon+x^{\infty}}$. Hence, $x^{\infty}$ must be $r_2(z)$.

\end{itemize}

Case (ii): $z= \frac{\lambda}{\epsilon}$ and $\sqrt{\lambda} \le \epsilon$. If $\sqrt{\lambda} \le \epsilon$, then $r_2(z)=0$, so $\x{1} < \x{0}$ from~\eqref{eq:important}. Then $x^{(k+1)} < \x{k}$ for all $k \ge 0$ and the sequence $\{\x{k}\}$ converges, say to $x^{\infty} \ge 0$, satisfying  $x^{\infty}=z-\frac{\lambda}{\epsilon+x^{\infty}}$. Hence, $x^{\infty}$ must be $r_2(z)=0$.

From the discussion above, we know that the sequence $\{\x{k}\}$ converges to $r_2(z)$.
 \end{proof}

It can be concluded from Lemma~\ref{lemma:conv2} and Lemma~\ref{lemma:conv3} that the sequence $\{\x{k}\}$ always converges to $r_2(z)$ for $z>\frac{\lambda}{\epsilon}$ regardless of the initial guess $\x{0}$. The next lemma shows that the sequence $\{\x{k}\}$ always converges to $0$ for all $z \in (0, 2\sqrt{\lambda}-\epsilon)$ if $2\sqrt{\lambda}-\epsilon >0$, independent of the initial guess $\x{0}$.

\begin{lemma}\label{lemma:conv4}
Suppose $2\sqrt{\lambda}-\epsilon >0$. Let the sequence $\{\x{k}\}$ be generated by the iterative scheme \eqref{eq:iterative} for a given $z \in (0, 2\sqrt{\lambda}-\epsilon)$ and an initial guess $\x{0} \ge 0$. Then the sequence $\{\x{k}\}$ converges to $0$.
\end{lemma}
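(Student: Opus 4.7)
The plan is to exploit equation \eqref{eq:important} to show strict decrease of the iterates (when they remain positive), then use a limit argument to rule out any positive fixed point.

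First I would observe that the hypothesis $z < 2\sqrt{\lambda}-\epsilon$ implies $z < \frac{\lambda}{\epsilon}$: indeed, as noted in the proof of Lemma~\ref{lemma:qproperties}, $\frac{\lambda}{\epsilon} = (2\sqrt{\lambda}-\epsilon) + (\sqrt{\epsilon}-\sqrt{\lambda/\epsilon})^2 \ge 2\sqrt{\lambda}-\epsilon > z$. This lets me invoke Lemma~\ref{lemma:conv2} to dispose immediately of the case in which some iterate $x^{(k_0)}$ equals zero: the sequence then stays at $0$ and a fortiori converges to $0$.

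Next I would handle the remaining case, $x^{(k)} > 0$ for all $k \ge 0$, by showing monotonicity. In this regime $x^{(k+1)} = z - \frac{\lambda}{\epsilon + x^{(k)}}$, and the first branch of \eqref{eq:important} applies to the pair $(x^{(0)}, x^{(1)})$. Because $z < 2\sqrt{\lambda}-\epsilon$ forces $\lambda - \tfrac{1}{4}(z+\epsilon)^2 > 0$, the right-hand side of \eqref{eq:important} is strictly negative, so $x^{(1)} < x^{(0)}$. By Lemma~\ref{lemma:conv1}, the sequence $\{x^{(k)}\}$ is strictly decreasing. Being bounded below by $0$, it converges to some limit $x^\infty \ge 0$.

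Finally I would rule out $x^\infty > 0$. If $x^\infty > 0$, passing to the limit in $x^{(k+1)} = z - \frac{\lambda}{\epsilon + x^{(k)}}$ yields $x^\infty = z - \frac{\lambda}{\epsilon + x^\infty}$, equivalently $(x^\infty)^2 - (z-\epsilon)x^\infty + (\lambda - z\epsilon) = 0$, whose discriminant is $(z+\epsilon)^2 - 4\lambda$. Since $z < 2\sqrt{\lambda}-\epsilon$, this discriminant is strictly negative, so the equation admits no real solution, contradicting $x^\infty > 0$. Hence $x^\infty = 0$, completing the proof.

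I do not expect a serious obstacle here: the structural ingredients (the factorization \eqref{eq:important}, the monotonicity principle in Lemma~\ref{lemma:conv1}, and the zero-absorption result of Lemma~\ref{lemma:conv2}) are already in place, and the only subtlety is remembering to split into the two cases according to whether the iterates ever vanish, and to verify the inequality $z < \frac{\lambda}{\epsilon}$ needed to apply Lemma~\ref{lemma:conv2}.
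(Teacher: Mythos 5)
Your proposal is correct and follows essentially the same route as the paper: split on whether an iterate ever vanishes (handled by Lemma~\ref{lemma:conv2}), use \eqref{eq:important} and Lemma~\ref{lemma:conv1} to get a decreasing positive sequence otherwise, and rule out a limit satisfying $x^{\infty}=z-\frac{\lambda}{\epsilon+x^{\infty}}$. Your explicit discriminant computation just makes precise the paper's assertion that this fixed-point equation is impossible for $z\in(0,2\sqrt{\lambda}-\epsilon)$.
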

\begin{proof}\ \ Notice that $2\sqrt{\lambda}-\epsilon \le \frac{\lambda}{\epsilon}$ for all positive $\lambda$ and $\epsilon$. If there exists $k_0 \ge 0$ such that $x^{(k_0)}=0$, by Lemma~\ref{lemma:conv2}, the sequence $\{\x{k}\}$ converges to $0$.

Now, assume that all elements $\x{k}$ are positive. By \eqref{eq:important}, we have $\x{1}<\x{0}$ for all $z \in (0, 2\sqrt{\lambda}-\epsilon)$ and an initial guess $\x{0} > 0$. By Lemma~\ref{lemma:conv2}, the sequence $\{\x{k}\}$ is decreasing and convergent. Suppose that $\lim_{k\rightarrow \infty} \x{k}=x^{\infty} \ge 0$. Then, $x^{\infty}=z-\frac{\lambda}{\epsilon+x^{\infty}}$ due to all $\x{k}>0$, but it is impossible for $z \in (0, 2\sqrt{\lambda}-\epsilon)$. We conclude that the sequence $\{\x{k}\}$ converges to $0$.
 \end{proof}

The next two lemmas deal with the convergence of the sequence $\{\x{k}\}$ for $z \in [2\sqrt{\lambda}-\epsilon, \frac{\lambda}{\epsilon})$.

\begin{lemma}\label{lemma:conv5}
Suppose parameters $\lambda$ and $\epsilon$ satisfying conditions $2\sqrt{\lambda}-\epsilon >0$ and $\sqrt{\lambda} \le \epsilon$. Let the sequence $\{\x{k}\}$ be generated by the iterative scheme \eqref{eq:iterative} for a given $z \in [2\sqrt{\lambda}-\epsilon, \frac{\lambda}{\epsilon})$ and an initial guess $\x{0} \ge 0$. Then the sequence $\{\x{k}\}$ converges to $0$.
\end{lemma}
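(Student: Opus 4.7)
The plan is to mirror the structure of the proof of Lemma~\ref{lemma:conv4}, splitting on whether some iterate hits zero, and using the factorization \eqref{eq:important} together with the sign information on $r_1(z)$ and $r_2(z)$ that the current regime affords.

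First I would record the key geometric facts about $r_1(z)$ and $r_2(z)$ under the hypotheses $2\sqrt{\lambda}-\epsilon>0$ and $\sqrt{\lambda}\le \epsilon$. Namely: $r_1(2\sqrt{\lambda}-\epsilon)=r_2(2\sqrt{\lambda}-\epsilon)=\sqrt{\lambda}-\epsilon\le 0$, $r_2(\lambda/\epsilon)=0$, and $r_1$ is strictly decreasing while $r_2$ is strictly increasing on $[2\sqrt{\lambda}-\epsilon,\infty)$ (all from the preamble of Lemma~\ref{lemma:qproperties}). Together, these give $r_1(z)\le 0$ and $r_2(z)<0$ for every $z\in[2\sqrt{\lambda}-\epsilon,\lambda/\epsilon)$. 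I would also remark that $2\sqrt{\lambda}-\epsilon\le \lambda/\epsilon$ always, so the interval under consideration is well defined.

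Next I would split into the two natural cases. If there exists $k_0\ge 0$ with $x^{(k_0)}=0$, then since $z<\lambda/\epsilon$, Lemma~\ref{lemma:conv2} directly gives $\x{k}\to 0$, and we are done. Otherwise assume $\x{k}>0$ for every $k\ge 0$; in particular, the recursion reads $x^{(k+1)}=z-\lambda/(\epsilon+\x{k})$ at every step, and identity \eqref{eq:important} applies with $z\ge 2\sqrt{\lambda}-\epsilon$. Plugging in and using $r_1(z),r_2(z)\le 0<\x{0}$, both factors $(\x{0}-r_1(z))$ and $(\x{0}-r_2(z))$ are strictly positive, so $\x{1}-\x{0}<0$. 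Lemma~\ref{lemma:conv1} then yields that $\{\x{k}\}$ is strictly decreasing; being bounded below by $0$, it converges to some $x^\infty\ge 0$.

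The final step is to eliminate this scenario by contradiction. If $x^\infty>0$, passing to the limit in $x^{(k+1)}=z-\lambda/(\epsilon+\x{k})$ gives $x^\infty=z-\lambda/(\epsilon+x^\infty)$, so $x^\infty\in\{r_1(z),r_2(z)\}$, impossible since both roots are nonpositive while $x^\infty>0$. If $x^\infty=0$, then for all sufficiently large $k$ we have $\lambda/(\epsilon+\x{k})$ arbitrarily close to $\lambda/\epsilon>z$, so the threshold in \eqref{eq:iterative} forces $x^{(k+1)}=0$, contradicting the standing assumption that every iterate is positive. Hence Case~2 cannot occur, and the lemma is established. The only mildly delicate point in the argument is the sign bookkeeping for $r_1(z),r_2(z)$ across the boundary $z=2\sqrt{\lambda}-\epsilon$ and the justification that in the "all positive" subcase the recursion never trips the thresholding branch until a limiting contradiction forces it to.
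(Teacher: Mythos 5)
Your proof is correct and follows essentially the same route as the paper's: split on whether some iterate hits zero, invoke Lemma~\ref{lemma:conv2} in that case, and otherwise use \eqref{eq:important} with $r_1(z)\le 0$ and $r_2(z)<0$ together with Lemma~\ref{lemma:conv1} to obtain a decreasing sequence whose limit would have to be a nonpositive fixed point, a contradiction. Your separate handling of the sub-case $x^\infty=0$ via the thresholding branch is a minor extra refinement that the paper covers implicitly through the limit equation $x^\infty=z-\lambda/(\epsilon+x^\infty)$.
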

\begin{proof}\ \ If there exists $k_0 \ge 0$ such that $x^{(k_0)}=0$, by Lemma~\ref{lemma:conv2}, the sequence $\{\x{k}\}$ converges to $0$.

Now, assume that all elements $\x{k}$ are positive. Since $r_1(z) < 0$ and $r_2(z) < 0$  for $z \in [2\sqrt{\lambda}-\epsilon, \frac{\lambda}{\epsilon})$, we know from \eqref{eq:important} that $\x{1} < \x{0}$ if $\x{0} \in (0,\infty)$.  Then, the sequence $\{\x{k}\}$ is decreasing by Lemma~\ref{lemma:conv1}, and therefore convergent. Suppose that $\lim_{k\rightarrow \infty} \x{k}=x^{\infty} \ge 0$. One has $x^{\infty}=z-\frac{\lambda}{\epsilon+x^{\infty}}$. Therefore, $x^{\infty}$ should be $r_1(z)$ or $r_2(z)$. However, it is impossible due to both $r_1(z)$ and $r_2(z)$ are negative for $z \in [2\sqrt{\lambda}-\epsilon, \frac{\lambda}{\epsilon})$.
 \end{proof}

\begin{lemma}\label{lemma:conv6}
Suppose $\sqrt{\lambda}> \epsilon$. Let the sequence $\{\x{k}\}$ be generated by the iterative scheme \eqref{eq:iterative} for a given $z \in [2\sqrt{\lambda}-\epsilon, \frac{\lambda}{\epsilon})$ and an initial guess $\x{0} \ge 0$. Then the following statements hold
\begin{enumerate}[(i)]
\item If $\x{0} \in (0, r_1(z))$, then the sequence $\{\x{k}\}$ converges to $0$; If $\x{0}=r_1(z)$, then the sequence $\{\x{k}\}$ converges to $r_1(z)$.

\item If $\x{0} \in (r_1(z),\infty)$, then  the sequence $\{\x{k}\}$ converges to $r_2(z)$.

\end{enumerate}
\end{lemma}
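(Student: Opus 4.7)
The plan is to combine the factorization identity \eqref{eq:important}, the monotonicity principle of Lemma~\ref{lemma:conv1}, and a bookkeeping of the positive fixed points of the map $\psi(x) := z - \lambda/(\epsilon + x)$. Under the standing hypotheses $\sqrt{\lambda} > \epsilon$ and $z \in [2\sqrt{\lambda}-\epsilon, \lambda/\epsilon)$, both $r_1(z)$ and $r_2(z)$ are strictly positive (from $r_1(2\sqrt{\lambda}-\epsilon) = r_2(2\sqrt{\lambda}-\epsilon) = \sqrt{\lambda}-\epsilon > 0$, $r_1(\lambda/\epsilon) = 0$, $r_2(\lambda/\epsilon) = \lambda/\epsilon - \epsilon > 0$, together with $r_1$ strictly decreasing and $r_2$ strictly increasing in $z$). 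Consequently $r_1(z)$ and $r_2(z)$ are precisely the positive fixed points of $\psi$ on $[0,\infty)$, and $\psi$ itself is strictly increasing there.

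For part (i) with $\x{0} \in (0, r_1(z))$, identity \eqref{eq:important} yields $(\x{0}-r_1(z))(\x{0}-r_2(z)) > 0$, so $\x{1} < \x{0}$. If some iterate $\x{k_0}$ happens to equal $0$ (which can occur once $\psi(\x{k}) < 0$ and \eqref{eq:iterative} truncates to zero), then Lemma~\ref{lemma:conv2} gives $\x{k} \to 0$, since $z < \lambda/\epsilon$. Otherwise every $\x{k}$ is positive, and Lemma~\ref{lemma:conv1} furnishes a strictly decreasing sequence with limit $x^{\infty} \in [0, \x{0})$ satisfying $x^{\infty} = \max\{0, \psi(x^{\infty})\}$. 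A positive $x^{\infty}$ would be a fixed point of $\psi$, but both fixed points are at least $r_1(z) > \x{0} > x^{\infty}$; therefore $x^{\infty} = 0$. The boundary case $\x{0} = r_1(z)$ is immediate, since $r_1(z)$ is a fixed point of $\psi$.

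For part (ii), I subdivide into $\x{0} \in (r_1(z), r_2(z))$, $\x{0} = r_2(z)$, and $\x{0} \in (r_2(z), \infty)$. In each subcase, \eqref{eq:important} pins down the sign of $\x{1} - \x{0}$ (positive, zero, negative, respectively), and strict monotonicity of $\psi$ keeps $\x{1}$ on the same side of $r_2(z)$ as $\x{0}$ (or at $r_2(z)$). Inducting, $\x{k}$ is monotone and remains in $(r_1(z), \infty)$; in particular every iterate is strictly positive and $\psi(\x{k}) > \psi(r_1(z)) = r_1(z) > 0$, so no truncation is ever triggered. Lemma~\ref{lemma:conv1} then gives a monotone bounded sequence whose limit is a positive fixed point of $\psi$ strictly exceeding $r_1(z)$, which forces the limit to be $r_2(z)$.

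The only real subtlety is the truncation-to-zero that can occur in part (i): once $\psi(\x{k}) < 0$, the algebraic identity \eqref{eq:important} stops tracking $\x{k+1}$, and one must hand off to Lemma~\ref{lemma:conv2}. In part (ii) this complication cannot arise, precisely because the inductive invariant $\x{k} > r_1(z)$ bars $\psi$ from taking negative values. Everything else is sign-checking around the ordered fixed points $0 < r_1(z) \le r_2(z)$.
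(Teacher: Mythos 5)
Your proof is correct and follows essentially the same route as the paper's: the same case split on the position of $\x{0}$ relative to $r_1(z)$ and $r_2(z)$, the sign of $\x{1}-\x{0}$ read off from \eqref{eq:important}, monotone convergence to a fixed point via Lemma~\ref{lemma:conv1}, and the handoff to Lemma~\ref{lemma:conv2} when an iterate is truncated to zero. The only local difference is that you exclude truncation for $\x{0}\ge r_1(z)$ by observing that $x\mapsto z-\frac{\lambda}{\epsilon+x}$ is increasing with positive fixed points $r_1(z)\le r_2(z)$ (so the invariant $\x{k}>r_1(z)$, resp.\ $\x{k}>r_2(z)$, propagates), whereas the paper proves the explicit inequality $\frac{\lambda}{z}-\epsilon<r_1(z)$ and separately computes $\x{1}-r_2(z)$; this is a mild streamlining of the same idea, not a different argument.
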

\begin{proof}\ \ First, we show that
\begin{equation}\label{eq:<r1z}
\frac{\lambda}{z}-\epsilon < r_1(z)
\end{equation}
holds for all $z \in [2\sqrt{\lambda}-\epsilon, \frac{\lambda}{\epsilon})$. Actually, by the definition of $r_1(z)$ in \eqref{def:r1} and through some manipulations, inequality~\eqref{eq:<r1z} is equivalent to $\sqrt{(z+\epsilon)^2-4\lambda} < (z+\epsilon)-\frac{2\lambda}{z}$. Since the expression  $(z+\epsilon)-\frac{2\lambda}{z}$ is positive for $z \in [2\sqrt{\lambda}-\epsilon, \frac{\lambda}{\epsilon})$, squaring the previous inequality followed by some simplifications yields $z < \frac{\lambda}{\epsilon}$, which is obviously true.

(i) If  $\x{0} \le \frac{\lambda}{z}-\epsilon$, then $\x{1}=0$ from \eqref{eq:iterative}. By Lemma~\ref{lemma:conv2}, the sequence $\{\x{k}\}$ converges to $0$.  If  $\x{0} > \frac{\lambda}{z}-\epsilon$, then $0<\x{1} \le \x{0}$ from \eqref{eq:important}. Using a similar argument above, if there exists $k_0 \ge 0$ such that $x^{(k_0))}=0$, by Lemma~\ref{lemma:conv2}, the sequence $\{\x{k}\}$ converges to $0$. Now, assume that all elements $\x{k}$ are positive. Then the sequence $\{\x{k}\}$ is decreasing and convergent  by Lemma~\ref{lemma:conv1}. Suppose that $\lim_{k\rightarrow \infty} \x{k}=x^{\infty} \ge 0$. Then, $x^{\infty}$ must be strictly less than $r_1(z)$ and $x^{\infty}=z-\frac{\lambda}{\epsilon+x^{\infty}}$, which, however, contradict to each other.
Hence, the sequence $\{\x{k}\}$ converges to $0$.

If $\x{0}=r_1(z)$, so $\x{0}>\frac{\lambda}{z}-\epsilon$ which implies that $\x{1}=\x{0}$. In this case, $\{\x{k}\}$ is a constant sequence and its limit is  $r_1(z)$.

(ii) If $\x{0} \in (r_1(z),\infty)$, we have $\x{0} > \frac{\lambda}{z}-\epsilon$ by \eqref{eq:<r1z}. From \eqref{eq:important}, we have  $\x{1} \ge \x{0}$ if $\x{0} \in (r_1(z), r_2(z)]$. So the sequence $\{\x{k}\}$ is increasing and must converge to $r_2(z)$.

From \eqref{eq:important}, we have $\x{1} < \x{0}$ if $\x{0} \in (r_2(z), \infty)$. Further, we can show that $\x{1} > r_2(z)$. Indeed, from $\x{1}-r_2(z)=z-\frac{\lambda}{\epsilon+\x{0}}-r_2(z)$ and the definition of $r_2(z)$, we have after some simplification
$$
\x{1}-r_2(z)=\frac{2(\x{0}-r_2(z))}{(z+\epsilon)+\sqrt{(z+\epsilon)^2-4\lambda}}>0
$$
holds for $z \in [2\sqrt{\lambda}-\epsilon, \frac{\lambda}{\epsilon})$ and  $\x{0} \ge r_2(z)$.  Hence, the sequence $\{\x{k}\}$ is decreasing and $\lim_{k \rightarrow \infty}\x{k}= x^{\infty} \ge r_2(z)$. We further have  $x^{\infty}=z-\frac{\lambda}{\epsilon+x^{\infty}}$ which implies $x^{\infty}=r_2(z)$.
 \end{proof}

Finally, the following two theorems summarize our main results.
\begin{theorem}\label{thm:alg-accuracy-1}
For $\sqrt{\lambda}\le \epsilon$, the iteratively reweighted algorithm can always provide the accurate solution to $\mathrm{prox}_{\lambda g}(z)$ for all $z \in \mathbb{R}$.
\end{theorem}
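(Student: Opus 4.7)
The plan is to match, region by region on the $z$-axis, the explicit form of $\mathrm{prox}_{\lambda g}(z)$ from Proposition~\ref{prop:prox<epsilon} against the limit of the iteration~\eqref{eq:iterative} supplied by Lemmas~\ref{lemma:conv2}--\ref{lemma:conv5}. Since $\mathrm{prox}_{\lambda g}$ is single-valued under $\sqrt{\lambda}\le\epsilon$ by Lemma~\ref{lemma:qconvexity}, accuracy amounts to showing that $\lim_{k}\x{k}$ equals that unique value for every admissible initialization. The iteration~\eqref{eq:iterative} depends on $\x{k}$ only through $|\x{k}|$ and on $z$ only through $|z|$ and $\sign z$, so by the parity already exploited in Lemma~\ref{lemma:ProxzSmallerz}(i) I may assume $z\ge 0$ and $\x{0}\ge 0$. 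The trivial case $z=0$ collapses to $\x{k}=0$ for all $k\ge 1$.

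For $z\ge \lambda/\epsilon$, Proposition~\ref{prop:prox<epsilon} yields $\mathrm{prox}_{\lambda g}(z)=\{r_2(z)\}$, with $r_2(\lambda/\epsilon)=0$ under $\sqrt{\lambda}\le\epsilon$. I would split on whether some iterate vanishes: if $\x{k_0}=0$ for some $k_0$, Lemma~\ref{lemma:conv2} delivers $\x{k}\to r_2(z)$; otherwise every iterate is positive and Lemma~\ref{lemma:conv3} (whose Case~(ii) also covers the boundary $z=\lambda/\epsilon$) gives the same limit. Hence the iteratively reweighted solution agrees with $\mathrm{prox}_{\lambda g}(z)$ on this region.

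It remains to show $\x{k}\to 0$ for $0<z<\lambda/\epsilon$, where Proposition~\ref{prop:prox<epsilon} gives $\mathrm{prox}_{\lambda g}(z)=\{0\}$. When $2\sqrt{\lambda}-\epsilon>0$, the subintervals $(0,2\sqrt{\lambda}-\epsilon)$ and $[2\sqrt{\lambda}-\epsilon,\lambda/\epsilon)$ are exactly the scopes of Lemmas~\ref{lemma:conv4} and~\ref{lemma:conv5}, and both force $\x{k}\to 0$. The main obstacle is the complementary regime $2\sqrt{\lambda}-\epsilon\le 0$, which no lemma covers verbatim; here I would extend the argument of Lemma~\ref{lemma:conv5}. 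Namely, any $z\in(0,\lambda/\epsilon)$ still satisfies $z\ge 2\sqrt{\lambda}-\epsilon$, so the factored identity~\eqref{eq:important} applies, and $\sqrt{\lambda}\le\epsilon$ gives $\lambda/\epsilon\le\epsilon$; a short computation then shows $\sqrt{((z+\epsilon)/2)^2-\lambda}<(\epsilon-z)/2$, forcing $r_1(z)<0$ and $r_2(z)<0$ throughout $(0,\lambda/\epsilon)$. With these signs in hand, \eqref{eq:important} makes $\{\x{k}\}$ strictly decreasing so long as it stays positive, and the only nonnegative fixed points of $x=z-\lambda/(\epsilon+x)$, namely $r_1(z)$ and $r_2(z)$, are excluded by the sign computation; Lemma~\ref{lemma:conv2} handles the branch in which some iterate reaches zero. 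Combining all regions identifies $\lim_k\x{k}$ with the unique element of $\mathrm{prox}_{\lambda g}(z)$ for every $z\in\mathbb{R}$.
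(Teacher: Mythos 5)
Your proposal follows the same route as the paper: the paper's entire proof is the single sentence ``The result follows directly from Proposition~\ref{prop:prox<epsilon} and Lemmas~\ref{lemma:conv2}--\ref{lemma:conv5}'', i.e.\ exactly the region-by-region matching you carry out. The one substantive point of difference is to your credit: you correctly observe that Lemmas~\ref{lemma:conv4} and~\ref{lemma:conv5} are both stated under the hypothesis $2\sqrt{\lambda}-\epsilon>0$, so the paper's citation, read literally, leaves the regime $2\sqrt{\lambda}\le\epsilon$ with $0<z<\lambda/\epsilon$ uncovered. Your patch is the right one and is consistent with the sign analysis already done in the proof of Lemma~\ref{lemma:qproperties}(i): for $2\sqrt{\lambda}\le\epsilon$ the factorization \eqref{eq:important} is valid for all $z\ge 0$, the inequality $\sqrt{\tfrac14(z+\epsilon)^2-\lambda}<\tfrac12(\epsilon-z)$ reduces after squaring to $z\epsilon<\lambda$, hence $r_1(z)<r_2(z)<0$ on $(0,\lambda/\epsilon)$, the sequence is strictly decreasing while positive by Lemma~\ref{lemma:conv1}, a positive limit would have to be a nonnegative root of $x=z-\lambda/(\epsilon+x)$ (impossible), and Lemma~\ref{lemma:conv2} finishes the branch where an iterate hits zero. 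The remaining regions ($z=0$, $z=\lambda/\epsilon$ where $r_2(\lambda/\epsilon)=0$, and $z>\lambda/\epsilon$ via Lemmas~\ref{lemma:conv2}--\ref{lemma:conv3}) are handled exactly as in the paper, and the reduction to $z\ge0$, $\x{0}\ge0$ by symmetry is legitimate. In short: correct, same decomposition as the paper, and strictly more careful than the paper's own one-line proof.
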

\begin{proof}\ \  The result  follows directly from Proposition~\ref{prop:prox<epsilon} and Lemmas~\ref{lemma:conv2}-\ref{lemma:conv5}.
 \end{proof}

When $\sqrt{\lambda}> \epsilon$, $r_1$ is a bijection function from $[2\sqrt{\lambda}-\epsilon, \infty)$ to $(-\infty, \sqrt{\lambda}-\epsilon]$.  Therefore, the function $r_1^{-1}$ the inverse of $r_1$ exists and maps $(-\infty, \sqrt{\lambda}-\epsilon]$ to $[2\sqrt{\lambda}-\epsilon, \infty)$.
\begin{theorem}\label{thm:alg-accuracy}
For $\sqrt{\lambda}> \epsilon$ and an initial guess $\x{0}$, the following statements hold for the iteratively reweighted algorithm.
\begin{enumerate}[(i)]
\item If $\x{0} \ge  \sqrt{\lambda}-\epsilon$, the iteratively reweighted algorithm provides the accurate solution to $\mathrm{prox}_{\lambda g}(z)$ for all $z$ on $\mathbb{R}\setminus  ((-z_*, -2\sqrt{\lambda}+\epsilon] \cup [2\sqrt{\lambda}-\epsilon, z_*))$.

\item If $\sqrt{\lambda}-\epsilon > \x{0} > r_1(z_*)$, the iteratively reweighted  algorithm provides the accurate solution to $\mathrm{prox}_{\lambda g}(z)$ for all $z$ on $\mathbb{R}\setminus  ((-z_*, -r_1^{-1}(\x{0})] \cup [r_1^{-1}(\x{0}), z_*))$.

\item If $\x{0}=r_1(z_*)$,  the iterative reweighted algorithm provides the accurate solution to $\mathrm{prox}_{\lambda g}(z)$ for all $z$ on $\mathbb{R}\setminus \{-z_*, z_*\}$.

\item If $r_1(z_*) > \x{0} \ge 0$, the iteratively reweighted algorithm provides the accurate solution to $\mathrm{prox}_{\lambda g}(z)$ for all $z$ on $\mathbb{R}\setminus  ([-r_1^{-1}(\x{0}), -z_*) \cup (z_*, r_1^{-1}(\x{0})]$.
\end{enumerate}
\end{theorem}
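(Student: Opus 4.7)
The plan is to reduce everything to the case $z \ge 0$ using the symmetry in Lemma~\ref{lemma:ProxzSmallerz}(i) — both $\mathrm{prox}_{\lambda g}$ and the iterative scheme \eqref{eq:iterative} satisfy $z \mapsto -z$, $x^{(k)} \mapsto -x^{(k)}$ — and then partition $[0,\infty)$ into the three zones dictated by the dynamics of $q_{\lambda,z}$: namely $[0, 2\sqrt{\lambda}-\epsilon)$, $[2\sqrt{\lambda}-\epsilon, \lambda/\epsilon)$, and $[\lambda/\epsilon, \infty)$. On the outer two zones the algorithm is shown to match Proposition~\ref{prop:prox>epsilon} for \emph{every} admissible $x^{(0)}$, and then the entire statement of the theorem boils down to what happens in the middle zone.

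Specifically, Lemma~\ref{lemma:conv4} handles $z \in [0, 2\sqrt{\lambda}-\epsilon)$: the iterates always converge to $0$, which agrees with $\mathrm{prox}_{\lambda g}(z)=\{0\}$ since $2\sqrt{\lambda}-\epsilon < z_*$. Lemmas~\ref{lemma:conv2} and \ref{lemma:conv3} together handle $z \ge \lambda/\epsilon$: the iterates always converge to $r_2(z)$, which agrees with $\mathrm{prox}_{\lambda g}(z)=\{r_2(z)\}$ since $\lambda/\epsilon > z_*$. These two regions therefore never contribute to the bad set, so I only need to inspect $z \in [2\sqrt{\lambda}-\epsilon, \lambda/\epsilon)$.

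On this middle zone, Lemma~\ref{lemma:conv6} pins the limit entirely to the relation between $x^{(0)}$ and $r_1(z)$: the limit is $0$ if $x^{(0)} < r_1(z)$, equals $r_1(z)$ if $x^{(0)} = r_1(z)$, and equals $r_2(z)$ if $x^{(0)} > r_1(z)$. Since $r_1$ is a strictly decreasing bijection from $[2\sqrt{\lambda}-\epsilon, \infty)$ onto $(-\infty, \sqrt{\lambda}-\epsilon]$, the three conditions on $x^{(0)}$ translate, via $r_1^{-1}$, into three conditions on $z$: $z > r_1^{-1}(x^{(0)})$, $z = r_1^{-1}(x^{(0)})$, $z < r_1^{-1}(x^{(0)})$. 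I then compare this limit with the true value from Proposition~\ref{prop:prox>epsilon} ($\{0\}$ on $[2\sqrt{\lambda}-\epsilon, z_*)$; $\{0, r_2(z_*)\}$ at $z_*$; $\{r_2(z)\}$ on $(z_*, \lambda/\epsilon)$) in each of the four cases.

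The four cases (i)–(iv) amount to locating $r_1^{-1}(x^{(0)})$ with respect to $[2\sqrt{\lambda}-\epsilon, \lambda/\epsilon]$ and $z_*$: case~(i) has $r_1^{-1}(x^{(0)}) \le 2\sqrt{\lambda}-\epsilon$, so the limit is $r_2(z)$ throughout and disagreement occurs precisely on $[2\sqrt{\lambda}-\epsilon, z_*)$; case~(ii) has $r_1^{-1}(x^{(0)}) \in (2\sqrt{\lambda}-\epsilon, z_*)$, splitting the middle zone so that disagreement is $[r_1^{-1}(x^{(0)}), z_*)$; case~(iii) has $r_1^{-1}(x^{(0)}) = z_*$, making the limit $r_1(z_*) \notin \{0, r_2(z_*)\}$ only at $z_*$ itself; case~(iv) has $r_1^{-1}(x^{(0)}) > z_*$, giving disagreement on $(z_*, r_1^{-1}(x^{(0)})]$. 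In each case, the negative counterpart follows verbatim from symmetry, yielding the stated sets. The main obstacle is the bookkeeping at the boundary points: deciding whether $z_*$, $r_1^{-1}(x^{(0)})$, and $2\sqrt{\lambda}-\epsilon$ belong to the bad set in each case — this hinges on whether the algorithm's limit $r_1(z)$ at $z = r_1^{-1}(x^{(0)})$ coincides with an element of $\mathrm{prox}_{\lambda g}(z)$, and on the fact that at $z = z_*$ both $0$ and $r_2(z_*)$ are admissible, which is exactly why case~(iv) is cleanly accurate at $z_*$ while case~(iii) fails there.
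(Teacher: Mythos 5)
Your proposal is correct and follows essentially the same route as the paper: reduce to $z\ge 0$ by symmetry, dispose of $|z|<2\sqrt{\lambda}-\epsilon$ and $|z|\ge\lambda/\epsilon$ via Lemmas~\ref{lemma:conv2}--\ref{lemma:conv4}, and then resolve the middle zone $[2\sqrt{\lambda}-\epsilon,\lambda/\epsilon)$ by locating $r_1^{-1}(x^{(0)})$ relative to $z_*$ using Lemma~\ref{lemma:conv6} and comparing with Proposition~\ref{prop:prox>epsilon}. The boundary bookkeeping you flag (in particular that the limit $r_1(z)$ at $z=r_1^{-1}(x^{(0)})$ never lies in $\mathrm{prox}_{\lambda g}(z)$, and that $z_*$ is accurate in case (iv) because $0\in\mathrm{prox}_{\lambda g}(z_*)$) is exactly how the paper settles the half-open endpoints.
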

\begin{proof}\ \   By Proposition~\ref{prop:prox<epsilon}, Lemma~\ref{lemma:conv2}, and Lemma~\ref{lemma:conv4}, the iteratively reweighted algorithm provides the accurate solution to $\mathrm{prox}_{\lambda g}(z)$ when $|z| > \frac{\lambda}{\epsilon}$ or $|z| < 2\sqrt{\lambda}-\epsilon$. The rest of the proof will focus on the situation for $z\in [2\sqrt{\lambda}-\epsilon), \frac{\lambda}{\epsilon}]$ due to Lemma~\ref{prop:prox<epsilon}.

(i) We know that $\sqrt{\lambda}-\epsilon=r_1(2\sqrt{\lambda}-\epsilon)>r_1(z)$ for all $z\in (2\sqrt{\lambda}-\epsilon, \frac{\lambda}{\epsilon}]$. From Lemma~\ref{lemma:conv3} and Lemma~\ref{lemma:conv6},  the limit of the sequence generated by the algorithm is $r_2(z)$ for  $z\in [2\sqrt{\lambda}-\epsilon), \frac{\lambda}{\epsilon}]$. Hence, the limit does not match to the true solution $\mathrm{prox}_{\lambda g}(z)$ when $z \in (-z_*, -2\sqrt{\lambda}+\epsilon] \cup [2\sqrt{\lambda}-\epsilon, z_*)$.

(ii) Notice that $r_1(z)$ is strictly decreasing on $[2\sqrt{\lambda}-\epsilon), \frac{\lambda}{\epsilon}]$. From $\sqrt{\lambda}-\epsilon > \x{0} > r_1(z_*)$, we have  $2\sqrt{\lambda}-\epsilon <r_1^{-1}(\x{0}) < z_*$; $\x{0}<r_1(z)$ for all $z\in [2\sqrt{\lambda}-\epsilon, r_1^{-1}(\x{0}))$; and
$\x{0}>r_1(z)$ for all $z\in (r_1^{-1}(\x{0}), \frac{\lambda}{\epsilon}]$. Accordingly, by Lemma~\ref{lemma:conv6},  the limit $x^\infty$ of the sequence generated by the algorithm is
\begin{equation}\label{eq:thm3-1}
x^\infty=\left\{
           \begin{array}{ll}
             0, & \hbox{if $z\in [2\sqrt{\lambda}-\epsilon, r_1^{-1}(\x{0}))$;} \\
             \x{0}, & \hbox{if $z=r_1^{-1}(\x{0})$;}\\
             r_2(z), & \hbox{if $z\in (r_1^{-1}(\x{0}), \frac{\lambda}{\epsilon}]$.}
           \end{array}
         \right.
\end{equation}
Hence, the limit does not match to the true solution $\mathrm{prox}_{\lambda g}(z)$ when $z \in (-z_*, -r_1^{-1}(\x{0})] \cup [r_1^{-1}(\x{0}), z_*)$.

(iii) It is directly from Lemma~\ref{lemma:conv6} and the fact of $0<r_1(z_*)<r_2(z_*)$.

(iv) Using similar arguments in item (ii), from $r_1(z_*) > \x{0} \ge 0$, we have  $z_* <r_1^{-1}(\x{0}) \le \frac{\lambda}{\epsilon}$; $\x{0}<r_1(z)$ for all $z\in [2\sqrt{\lambda}-\epsilon, r_1^{-1}(\x{0}))$; and
$\x{0}>r_1(z)$ for all $z\in (r_1^{-1}(\x{0}), \frac{\lambda}{\epsilon}]$. Accordingly, by Lemma~\ref{lemma:conv6}, for $r_1(z_*) > \x{0} > 0$ the limit $x^\infty$ of the sequence generated by the algorithm is given in \eqref{eq:thm3-1}. Hence, the limit does not match to the true solution $\mathrm{prox}_{\lambda g}(z)$ when $z \in [-r_1^{-1}(\x{0}), z_*) \cup (z_*, r_1^{-1}(\x{0})]$. This statement is also true for $\x{0}=0$ by Lemma~\ref{lemma:conv1}.
 \end{proof}
The results in Theorem~\ref{thm:alg-accuracy} can be visualized through Figure~\ref{fig4}. For each initialization $\x{0}$, the intervals for which $\prox_{\lambda g}(z)$ disagrees with the reweighted $\ell_1$ solution are represented by the solid horizontal lines.

\begin{figure}
\centering
\begin{tabular}{c}
\includegraphics[width=2in]{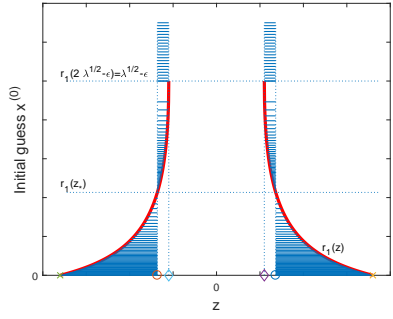}
\end{tabular}
\caption{The intervals on which the iteratively reweighted algorithm will fail.  Here, '$\times$', '$\circ$', and '$\diamond$' denote $\pm\frac{\lambda}{\epsilon}$, $\pm z_*$, and $\pm (2\sqrt{\lambda}-\epsilon)$, respectively.}
\label{fig4}
\end{figure}

\section{Solutions to Optimization Problem~\eqref{eq:prox-f-Low-Rank}}\label{sec:Low-Rank}
In this section, we present the solution to optimization Problem~\eqref{eq:prox-f-Low-Rank}.

Let $\mathcal{M}_{m,n}$ denote the Euclidean space of $m \times n$ real matrices, with inner product ${\langle X, Y \rangle= \mathrm{tr} X^\top Y}$. For any matrix $X \in \mathcal{M}_{m,n}$, let $X_{ij}$ denote its $(i,j)$-th entry. For any vector $x \in \mathbb{R}^{m \land n}$, let $\mathrm{Diag} (x)$
denote the matrix with $(\mathrm{Diag} (x))_{ii}=x_i$ for all $i$, and $(\mathrm{Diag} (x))_{ij}=0$ for $i \neq j$. We want to turn readers attention to the fact that $\mathrm{Diag} (x)$ will denote an $m \times n$ matrix.

For any $X \in \mathcal{M}_{m,n}$, we define ${\sigma(X):=(\sigma_1(X), \sigma_2(X),\ldots, \sigma_{m \land n}(X))^\top}$, where $\sigma_1(X)\ge \sigma_2(X) \ge \ldots \ge \sigma_{m \land n}(X)$ are the ordered singular values of $X$. Denote by $\mathcal{O}(X)$ the set of all pairs $(U,V)$:
$$
\mathcal{O}(X) :=\left\{(U, V) \in \mathcal{M}_{m,m} \times \mathcal{M}_{n,n} : U^\top U=I, V^\top V=I, X=U \mathrm{Diag}(\sigma(X)) V^\top \right\}.
$$
That is, for any pair $(U, V) \in \mathcal{O}(X)$, $U \mathrm{Diag}(\sigma(X)) V^\top$ is a singular value decomposition of $X$.

\begin{theorem}\label{thm:prox-f-Low-Rank}
For each $Z \in \mathcal{M}_{m,n}$, $\epsilon>0$, and $\lambda > 0$, the set $\mathrm{prox}_{\lambda f\circ \sigma}(Z)$ collects all minimizers to problem~\eqref{eq:prox-f-Low-Rank}. Moreover, if $X^\star \in \mathrm{prox}_{\lambda f\circ \sigma}(Z)$, then there exist a pair $(U,V)\in \mathcal{O}(Z)$ and a vector $d \in \mathrm{prox}_{\lambda f}(\sigma(Z))$ such that
\begin{equation}\label{eq:Low-Rank}
X^\star =  U \mathrm{Diag}(d) V^\top.
\end{equation}
\end{theorem}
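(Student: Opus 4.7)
The plan is to reduce the matrix problem to the separable vector problem already solved in Theorem~\ref{thm:prox-f-CS}, using von Neumann's trace inequality as the bridge. The basic identity is
\[
\|X-Z\|_F^2 \;=\; \|\sigma(X)\|^2 - 2\langle X,Z\rangle + \|\sigma(Z)\|^2,
\]
so that after invoking the trace inequality $\langle X,Z\rangle \le \sigma(X)^\top \sigma(Z)$, we obtain the lower bound
\[
\frac{1}{2\lambda}\|X-Z\|_F^2 + (f\circ \sigma)(X) \;\ge\; \frac{1}{2\lambda}\|\sigma(X) - \sigma(Z)\|^2 + f(\sigma(X)).
\]
Because $f$ is absolutely symmetric, the right-hand side depends on $X$ only through $\sigma(X)\in \mathbb{R}^{m\land n}_\downarrow$, and we have produced a scalar proximity problem in $\sigma(X)$ with data $\sigma(Z)\in \mathbb{R}^{m\land n}_\downarrow$.

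Next, I would argue that this vector problem is solved exactly by $\mathrm{prox}_{\lambda f}(\sigma(Z))$. Specifically, since $\sigma(Z)\in \mathbb{R}^{m\land n}_\downarrow$, Lemmas~\ref{lemma:ProxzSmallerz}(ii) and \ref{lemma:f-order} ensure that every element $d\in \mathrm{prox}_{\lambda f}(\sigma(Z))$ is itself nonnegative and nonincreasing. Hence the unconstrained scalar proximity problem already delivers a point inside the ordered cone, and the infimum of the lower bound over $\mathbb{R}^{m\land n}_\downarrow$ equals $\min_w \{\tfrac{1}{2\lambda}\|w-\sigma(Z)\|^2 + f(w)\}$, with minimizer set exactly $\mathrm{prox}_{\lambda f}(\sigma(Z))$.

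For the construction, I would fix any $d\in \mathrm{prox}_{\lambda f}(\sigma(Z))$ and any $(U,V)\in \mathcal{O}(Z)$, and define $X^\star := U\mathrm{Diag}(d)V^\top$. Since $d$ has nonnegative entries in nonincreasing order, $\sigma(X^\star)=d$; moreover, $X^\star$ and $Z$ share the SVD basis $(U,V)$, so $\langle X^\star, Z\rangle = d^\top\sigma(Z)$, saturating von Neumann's inequality. Combined with $\sigma(X^\star)=d$ being a minimizer of the scalar proximity problem, this shows $X^\star$ attains the lower bound, hence is a minimizer. Conversely, if $X^\star$ is any minimizer of \eqref{eq:prox-f-Low-Rank}, then both inequalities above must hold with equality; equality in von Neumann's trace inequality forces the existence of orthogonal matrices $U,V$ for which $Z = U\mathrm{Diag}(\sigma(Z))V^\top$ and $X^\star = U\mathrm{Diag}(\sigma(X^\star))V^\top$ simultaneously, and the saturated scalar inequality forces $\sigma(X^\star)\in \mathrm{prox}_{\lambda f}(\sigma(Z))$; setting $d=\sigma(X^\star)$ gives the representation \eqref{eq:Low-Rank}.

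The main obstacle is being careful with the equality case of von Neumann's trace inequality: we need the \emph{simultaneous} singular value decomposition (a pair $(U,V)\in \mathcal{O}(X^\star)\cap \mathcal{O}(Z)$), which is a standard but nontrivial refinement of the inequality itself, handling ties among singular values. The separable decomposition $\mathrm{prox}_{\lambda f}=\mathrm{prox}_{\lambda g}\times\cdots\times\mathrm{prox}_{\lambda g}$ (already recorded in the excerpt), together with the ordering preservation in Lemma~\ref{lemma:f-order}, then ties the vector and matrix sides together without any dimension-specific wrinkles.
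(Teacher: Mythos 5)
Your proposal is correct and follows essentially the same route as the paper: expand the Frobenius norm, apply von Neumann's trace inequality to reduce to the separable vector proximity problem, and invoke Lemma~\ref{lemma:f-order} to confirm that $\mathrm{prox}_{\lambda f}(\sigma(Z))$ lies in the ordered cone so the reduction is tight. If anything, you are more careful than the paper on the ``moreover'' direction, since the paper only verifies that matrices of the form \eqref{eq:Low-Rank} are minimizers, whereas you explicitly note that equality in von Neumann's inequality forces a simultaneous singular value decomposition of $X^\star$ and $Z$.
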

\begin{proof} \ \
The statement that the set $\mathrm{prox}_{\lambda f\circ \sigma}(Z)$ collects all minimizers to problem~\eqref{eq:prox-f-Low-Rank} is from the definition of proximity operator. Next, we show that $X^\star$ in \eqref{eq:Low-Rank} indeed is a solution to problem~\eqref{eq:prox-f-Low-Rank}.

Problem~\eqref{eq:prox-f-Low-Rank} can be equivalently reformulated as
$$
\min_{d\in \mathbb{R}^{m\land n}_\downarrow}\left\{ \min_{X \in \mathcal{M}_{m,n}, \mathrm{Diag}(\sigma(X))=d}\left\{\frac{1}{2\lambda} \|X-Z\|^2_F+\sum_{i=1}^{m\land n} \log\left(1+\frac{d_i}{\epsilon}\right)\right\}\right\}.
$$
Note that
\begin{eqnarray*}
\|X-Z\|^2_F &=& \mathrm{tr} (X^\top X)-2 \mathrm{tr} (X^\top Z) + \mathrm{tr} (Z^\top Z) \\
&=&\sum_{i=1}^{m \land n} d_i^2 - 2 \mathrm{tr} (X^\top Z) + \sum_{i=1}^{m \land n} \sigma_i(Z)^2 \\
&\ge& \sum_{i=1}^{m \land n} d_i^2 - 2 \sigma(Z)^\top d  + \sum_{i=1}^{m \land n} \sigma_i(Z)^2.
\end{eqnarray*}
The last inequality is due to von Neumann's trace inequality (see \cite{Mirsky:MM:1975}).  Equality holds when $X$ admits the singular value decomposition $X=U \mathrm{Diag}(d) V^\top$, where $(U,V)\in \mathcal{O}(Z)$. Then the optimization problem reduces to
$$
\min_{d\in \mathbb{R}^{m\land n}_\downarrow}\left\{ \sum_{i=1}^{m\land n} \left( \frac{1}{2\lambda} (d_i-\sigma_i(Z))^2 + \log\left(1+\frac{d_i}{\epsilon}\right)\right)\right\}.
$$
The objective function is completely separable and is minimized only when $d_i \in \mathrm{prox}_{\lambda g}(\sigma_i(Z))$. This is a feasible solution because $\sigma(Z) \in  \mathbb{R}^{m\land n}_\downarrow$ implies $\mathrm{prox}_{\lambda f}(\sigma(Z)) \subset \mathbb{R}^{m\land n}_\downarrow$ by Lemma~\ref{lemma:f-order}. This completes the proof.
 \end{proof}
We remark that the main ideas in the above proof are from \cite{Chen-Dong-Chan:Biometrika-2013}. The result in Theorem~\ref{thm:prox-f-Low-Rank} can be applied, for example in \cite{cai2020image,Deng-Dai-Liu-Zhang-Hu:IEEENNLS:13,Dong-Shi-Li-Ma-Huang:IEEEIP:2014,Fazel-Hindi-Boyd:03}, to avoid an inner loop for evaluating $\mathrm{prox}_{\lambda f\circ \sigma}$.

\section{Conclusions}\label{sec:conclusions}
We presented the explicit expressions of the proximity operators of the log-sum penalty and its composition with the singular value function. In the existing work, these proximity operators were computed through the iteratively reweighted $\ell_1$ methods that are inefficient, and may sometimes give inaccurate results, as analyzed in Theorem~\ref{thm:alg-accuracy} and demonstrated in Figure~\ref{fig4}. By applying the results from this paper, one can avoid using inefficient iterative approaches to compute the proximity operator of the log-sum penalty, and can prevent inaccurate solutions from sub-optimal initial values.  Moreover, we have characterized the behavior of the proximity operator for the log-sum penalty, and further justified its use as a nonconvex surrogate in $\ell_0$ and $\ell_1$ norm minimization problems.

\section*{Disclaimer and Acknowledgment of Support}
The work of L. Shen was supported in part by the National Science Foundation under grant DMS-1913039, 2020
U.S.\ Air Force Summer Faculty Fellowship Program, and the 2020 Air Force Visiting Faculty Research Program funded through AFOSR grant 18RICOR029. Any opinions, findings and conclusions or recommendations expressed in this material are those of the authors and do not necessarily reflect the views of the U.S.\ Air Force Research
Laboratory.  Cleared for public release 08 Jan 2021: Case number AFRL-2021-0024. 

\bibliographystyle{siam}

\end{document}